%
%

\documentclass[12pt]{amsart}
\usepackage{amssymb}
\usepackage{amsmath}
\usepackage{amsthm}
\usepackage{enumerate}
\usepackage{multirow}
\usepackage[all]{xy}
\usepackage[T1]{fontenc}
\usepackage{mathrsfs}
\usepackage{bbold}
\usepackage{tikz}
\usetikzlibrary{matrix,arrows,backgrounds,shapes.misc,shapes.geometric,patterns,calc,positioning}

\usepackage[colorlinks=true, pdfstartview=FitV, linkcolor=blue, citecolor=blue, urlcolor=blue]{hyperref}
%

\newtheorem{theorem}{Theorem}

\newtheorem{lemma}[theorem]{Lemma}
\newtheorem{prop}[theorem]{Proposition}
\newtheorem{corollary}[theorem]{Corollary}

\theoremstyle{definition}
\newtheorem{definition}[theorem]{Definition}

\newtheorem{example}[theorem]{Example}

\theoremstyle{remark}
\newtheorem{remark}[theorem]{Remark}

\newcommand{\comment}[1]{}

\newcommand{\key}[1]{\emph{#1}}
\newcommand{\setst}[2]{ \{ #1 \, | \ #2 \} }
\newcommand{\defeq}{\stackrel{\text{\tiny def}}{=}}

\newcommand{\Z}{\mathbb{Z}}
\DeclareMathOperator{\Hom}{Hom}
\DeclareMathOperator{\Ext}{Ext}
\DeclareMathOperator{\Rep}{rep}
\DeclareMathOperator{\rep}{rep}
\DeclareMathOperator{\supp}{supp}

\newcommand{\arrows}{_{1}}
\newcommand{\verts}{_{0}}
\newcommand{\quivq}{{\downarrow\! Q}}
\newcommand{\lin}{L}







\newcommand{\catc}{\mathscr{C}}

\newcommand{\za}{\ensuremath{\alpha}}
\newcommand{\zb}{\ensuremath{\beta}}
\newcommand{\ze}{\ensuremath{\epsilon}}
\newcommand{\zd}{\ensuremath{\delta}}
\newcommand{\zg}{\ensuremath{\gamma}}

\begin{document}
\title{Idempotents in representation rings of quivers }
\author{Ryan Kinser and Ralf Schiffler}\thanks{The second author is supported by the NSF grants  DMS-0908765 and DMS-1001637 and by the University of Connecticut.}
\address{Dept. of Mathematics, University of Connecticut, Storrs, CT 06269}
\email{ryan.kinser@uconn.edu, schiffler@math.uconn.edu}

\maketitle
\begin{abstract}{For an acyclic quiver $Q$, we solve the Clebsch-Gordan problem for the projective representations by computing the multiplicity of a given indecomposable projective in the tensor product of two indecomposable projectives. Motivated by this problem for arbitrary representations, we study idempotents in the representation ring of $Q$ (the free abelian group on the indecomposable representations, with multiplication given by tensor product). We give a general technique for constructing such idempotents and for decomposing the representation ring into a direct product of ideals, utilizing morphisms between quivers and categorical M\"obius inversion.
  }
\end{abstract}
\section{Introduction}\label{sect:intro}

The problem of describing a tensor product of two representations of some algebraic object has appeared in many contexts.  When the category of representations in question has the Krull-Schmidt property (unique decomposition into indecomposables), the problem can be stated for representations $X,Y,Z$ as ``What is the multiplicity of $Z$ as a direct summand in $X \otimes Y$?''
This is sometimes referred to as the \key{Clebsch-Gordan problem}, in honor of A. Clebsch and P. Gordan, who studied the problem for certain Lie groups in the language of invariant theory.

These multiplicities for representations of the groups $SU(2)$ and $SO(3, \mathbb{R})$ give rise to the Clebsch-Gordan coefficients used in quantum mechanics.
In the case of representations of $GL(n,\mathbb{C})$, these multiplicities are the Littlewood-Richardson coefficients, which play an important role in algebraic combinatorics and Schubert calculus \cite{MR1464693}.


Tensor products of quiver representations have been studied by Strassen \cite{strassenasymptotic} in relation to orbit-closure degenerations, and Herschend studied the relation to bialgebra structures on the path algebra in \cite{herschtensorjpaa}.  The Clebsch-Gordan problem for quiver representations is solved explicitly in various situations where a classification of indecomposables is known \cite{MR2563181,herschende6,Herschend:2009kx}, whereas other results on tensor product multiplicities without a classification of indecomposables have appeared in \cite{kinserrank,kinserrootedtrees}.

In this paper, we study the tensor products of representations of a quiver $Q$ in terms of the representation ring $R(Q)$ of the quiver.  This ring has a $\Z$-basis consisting of indecomposable representations of $Q$, with sum corresponding to direct sum and product to tensor product.  The same construction has been used in modular representation theory of finite groups, where it is sometimes called the Green ring \cite{MR842476}.
Besides the actual representations, $R(Q)$ also contains formal additive inverses of representations, and thus ``differences'' of representations.  Understanding the multiplication in this ring can be easier  than directly working with the tensor product of representations. We recall the definition and basic properties of $R(Q)$ in Section \ref{sect:background}.

In Section \ref{sect:projreps}, we solve the Clebsch-Gordan problem for projective representations of an acyclic quiver $Q$ with an explicit formula as follows. 
Let $x,y,w$ be vertices in  $Q$ and $P(x),P(y),P(w)$ be the corresponding indecomposable projective representations. 
\begin{theorem}
The multiplicity of $P(w)$ in $P(x)\otimes P(y)$ equals
\[n_{xw}n_{yw} - \sum_{z\to w} n_{xz}n_{yz},\] where the sum is over all arrows with terminal vertex $w$, and $n_{ij}$ denotes the number of paths from $i$ to $j$ in the quiver.
\end{theorem}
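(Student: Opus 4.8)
The plan is to prove something stronger than the statement: that $M:=P(x)\otimes P(y)$ is itself a \emph{projective} representation, and to identify its indecomposable summands explicitly. Since $kQ$ is finite dimensional, $\rep Q$ is Krull--Schmidt, so once $M$ is known to be projective we automatically obtain $M\cong\bigoplus_{w\in Q_0}P(w)^{t_w}$ with $t_w=\dim\Hom_{kQ}(M,S(w))=\dim(M/\rad M)_w$, where $S(w)$ is the simple representation at $w$. The theorem then amounts to two things: computing $\dim(M/\rad M)_w$, and proving that $M$ is projective.

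First I would compute the radical of $M$. The space $P(x)_v$ has a basis indexed by the paths $x\to v$, so $M_v=P(x)_v\otimes P(y)_v$ has a basis indexed by the pairs $(p,q)$ with $p\colon x\to v$ and $q\colon y\to v$, and for an arrow $a\colon u\to v$ the structure map of $M$ sends $(p,q)\mapsto(ap,aq)$. Consequently the radical, which at the vertex $v$ is $\sum_{a\colon u\to v}\operatorname{Im} M_a$, is spanned by those basis pairs $(p',q')$ in which $p'$ and $q'$ end with a common arrow; as the images $\operatorname{Im} M_a$ for distinct arrows $a$ into $v$ are spanned by disjoint sets of basis vectors, $\dim(\rad M)_v=\sum_{a\colon u\to v}n_{xu}n_{yu}$, i.e. the quantity written $\sum_{z\to v}n_{xz}n_{yz}$ in the statement. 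Hence $\dim(M/\rad M)_w=n_{xw}n_{yw}-\sum_{z\to w}n_{xz}n_{yz}$, precisely the number in the theorem, and it remains only to establish projectivity of $M$.

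For projectivity I see two routes. The conceptual one exhibits the decomposition directly: to each basis pair $(p',q')$ of $M$ one attaches its \emph{maximal common suffix}, writing $p'=r\tilde p$ and $q'=r\tilde q$ where $r\colon w\to v$ is the longest path that is simultaneously a final segment of $p'$ and of $q'$ --- well defined because the final segments of a fixed path are totally ordered by length. For such a ``primitive'' pair $(\tilde p,\tilde q)$, with $\tilde p\colon x\to w$ and $\tilde q\colon y\to w$ having no common final arrow, the assignment $r\mapsto(r\tilde p,r\tilde q)$ is a morphism $P(w)\to M$ injective on basis vectors, so it identifies $P(w)$ with the subrepresentation of $M$ generated by $(\tilde p,\tilde q)$; and $M$ is the direct sum of these subrepresentations, because the maximal-common-suffix rule partitions the basis of $M$. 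Counting the primitive pairs landing at $w$ recovers $n_{xw}n_{yw}-\sum_{z\to w}n_{xz}n_{yz}$ again. The quick route instead uses that $kQ$ is hereditary, so $M$ is projective as soon as $\Ext^1_{kQ}(M,S(w))=0$ for every $w$; the Euler form of $kQ$ gives $\dim\Ext^1_{kQ}(M,S(w))=\dim(M/\rad M)_w-\big((\underline{\dim}\,M)_w-\sum_{z\to w}(\underline{\dim}\,M)_z\big)$, and since $(\underline{\dim}\,M)_w=n_{xw}n_{yw}$, the computation of the second paragraph shows that both parenthesized expressions equal $n_{xw}n_{yw}-\sum_{z\to w}n_{xz}n_{yz}$, whence $\Ext^1_{kQ}(M,S(w))=0$.

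The step I expect to require the most care is the passage from ``$M/\rad M$ has these dimensions'' to ``these are the summand multiplicities'': this genuinely uses the projectivity of $M$ --- for instance $S(w)$ has the same top as $P(w)$ yet is almost never a summand of a projective representation --- so the real content is the projectivity argument rather than the radical computation. I would also verify the degenerate cases $w\in\{x,y\}$ or $x=y$, where some of the paths above are trivial; these cause no trouble, since an acyclic quiver has no nontrivial path from a vertex to itself, so for example $n_{xz}=0$ whenever there is an arrow $z\to x$.
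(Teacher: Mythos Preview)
Your argument is correct, and both routes to projectivity work as stated.  The approach, however, is genuinely different from the paper's.  The paper first proves that $P(x)\otimes P(y)$ is projective by induction on the number of vertices of $Q$ (removing a sink and extending a decomposition over the smaller quiver), and then computes the multiplicities by passing to the representation ring: the Cartan matrix gives a change of basis from the $P(z)$ to orthogonal idempotents $e(z)=P(z)-\sum_{z\to u}P(u)$, so that $P(x)\otimes P(y)=\sum_z n_{xz}n_{yz}\,e(z)$, and substituting back yields the formula.  Your proof instead reads off the top of $M$ directly from the path basis and then establishes projectivity either by the explicit maximal-common-suffix partition or by the Euler form.  Your route~(a) is essentially the representation-theoretic shadow of what the paper later proves combinatorially as Proposition~\ref{prop 25} (the fiber product $P_Q\times_Q P_Q$ decomposes into $P$-type pieces indexed by pairs of paths not ending in a common arrow).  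The paper's approach is less direct here but is chosen because it motivates the orthogonal-idempotent and M\"obius-inversion machinery developed in the rest of the paper; your argument is more self-contained and, in route~(a), actually gives the explicit summands rather than only their multiplicities.
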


The proof technique is to give an integral change of basis in the subring of $R(Q)$ spanned by projectives, to a new basis consisting of orthogonal idempotents.  
These are trivial to multiply, and then changing back to the original basis gives a multiplication formula for projective representations.
This motivates the construction of other sets of orthogonal idempotents in $R(Q)$.

The projective representations of $Q$ can be concretely presented in terms of discrete data from $Q$, namely, the set of paths in $Q$.  In Section \ref{sect:quivq}, we review a general method for constructing a representation which is not necessarily projective from discrete data, using a morphism of quivers $f\colon Q' \to Q$, also called a coloring of $Q'$ by $Q$, or a quiver over $Q$.
We describe how such a morphism gives rise to a representation of $Q$ via linearization, which generalizes the process of passing from a permutation representation of a finite group to the associated linear representation.
This can be thought of as the opposite course of action to taking a coefficient quiver of a representation \cite{MR1171233}.

Linearization allows us to study certain representations combinatorially from the discrete data in a quiver over $Q$.  A result of Herschend states that, under some mild technical hypotheses, linearization takes the fiber product of two quivers over $Q$ to the tensor product of their linearizations \cite{Herschend:2009kx}.  Thus we expect to be able to analyze the  tensor product of certain representations via quivers over $Q$.

The first main result of the paper, presented in Section \ref{sect main}, is a sufficient condition for a collection of quivers over $Q$ to give rise to a set of orthogonal idempotents in $R(Q)$ (Theorem \ref{thm:main}).  The basic idea is to form an acyclic category (a generalization of a poset) from a collection of quivers over $Q$, then use a categorical form of M\"obius inversion
to orthogonalize the linearizations of these quivers in $R(Q)$.

The motivating application for Theorem \ref{thm:main} is covered in Section \ref{sect:piecat}.  For any acyclic quiver $Q$, we define a category PIE of quivers over $Q$, such that the objects in PIE are in bijection with those indecomposable representations of $Q$ which, after restriction to some subquiver of $Q$, are either projective,  injective, or of dimension 1 at each vertex. We describe morphisms and fiber products in   PIE and show that PIE satisfies the hypotheses of Theorem \ref{thm:main}. 
This allows us to associate an idempotent $e_x\in R(Q)$ to every object $x\in \textup{PIE}$, and to prove our second main result:

\begin{theorem}
Let $Q$ be an acyclic quiver. Then  $R(Q)$ has a direct product structure
\[
R(Q) \cong \prod_{x \in \textup{PIE}_0} \langle e_x \rangle,
\]
where $\langle e_x \rangle $ is the principal ideal generated by $e_x$.
\end{theorem}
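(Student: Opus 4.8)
The strategy is to verify that $\{e_x\}_{x\in\textup{PIE}_0}$ is a \emph{complete} set of pairwise orthogonal idempotents in $R(Q)$ and then invoke the usual Peirce decomposition of a commutative ring. Idempotency and pairwise orthogonality are furnished by Theorem \ref{thm:main}, once one knows (Section \ref{sect:piecat}) that PIE satisfies its hypotheses; since $\otimes$ is commutative, each $e_x$ is moreover central, so $\langle e_x\rangle = e_x R(Q)$ is a ring with unit $e_x$, and $r\mapsto (e_x r)_{x}$ is a ring homomorphism $\Phi\colon R(Q)\to\prod_{x\in\textup{PIE}_0}\langle e_x\rangle$. Granting the single remaining fact that $\sum_{x\in\textup{PIE}_0}e_x = 1$ as a finite sum in $R(Q)$, the map $\Phi$ is bijective by a short argument: it is injective because $r = (\sum_x e_x)r = \sum_x e_x r$, and surjective because any tuple in the product has finite support (only finitely many $e_x$ are nonzero, and the remaining $\langle e_x\rangle$ are zero) and such a tuple $(e_x s_x)_x$ is the image of the finite sum $\sum_x e_x s_x$ by orthogonality. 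So the entire content of the theorem is the completeness relation $\sum_x e_x = 1$.

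To establish this I would use the terminal object of the acyclic category underlying PIE. The identity coloring $\mathrm{id}_Q\colon Q\to Q$ is a quiver over $Q$ whose linearization $\lin(\mathrm{id}_Q)$ is the thin sincere representation with every structure map an identity; for $Q$ connected this representation is indecomposable and is exactly the $\otimes$-unit of $R(Q)$, so $\lin(\mathrm{id}_Q) = 1$ in $R(Q)$. Being thin, $\lin(\mathrm{id}_Q)$ has dimension $1$ at each vertex, so $(Q,\mathrm{id}_Q)$ lies in $\textup{PIE}_0$; and it is terminal in the category of quivers over $Q$ — the structure map of any quiver over $Q$ is its unique morphism to $(Q,\mathrm{id}_Q)$ — hence it is the maximum object of the acyclic category, once one has checked in Section \ref{sect:piecat} that PIE-morphisms are the expected morphisms of quivers over $Q$. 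The idempotents of Theorem \ref{thm:main} are produced by M\"obius inversion precisely so that $\lin(x) = \sum_{y\le x}e_y$ for every object $x$; taking $x = (Q,\mathrm{id}_Q)$ and using maximality yields $\sum_{x\in\textup{PIE}_0}e_x = \lin(\mathrm{id}_Q) = 1$. (Equivalently, interchanging sums, $\sum_x e_x = \sum_y \lin(y)\big(\sum_{z\ge y}\mu(y,z)\big) = \sum_y \lin(y)\,\delta_{y,(Q,\mathrm{id}_Q)} = 1$ by the defining property of the M\"obius function of an acyclic category possessing a terminal object.) In particular $e_x = 0$ for all but finitely many $x$, so the product in the statement is genuinely finite.

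The crux — and the main obstacle — is this completeness step, and within it the bookkeeping identifying $(Q,\mathrm{id}_Q)$ as an object of PIE that is maximal in the relevant acyclic category, together with confirming that the normalization in Theorem \ref{thm:main} is $\lin(x) = \sum_{y\le x}e_y$ rather than the opposite convention (which would single out minimal objects and force a dual argument). Everything downstream of $\sum_x e_x = 1$ is the routine Peirce decomposition recalled above. Finally, if $Q$ is disconnected one reduces to the connected case componentwise: writing $Q=\bigsqcup_i Q_i$, tensor products of representations supported on different components vanish, so $R(Q)\cong\prod_i R(Q_i)$, $\textup{PIE}_0$ splits as the disjoint union of the $\textup{PIE}_0(Q_i)$, and the statement follows by applying the connected case to each $Q_i$.
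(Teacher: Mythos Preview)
Your proof is correct and follows essentially the same route as the paper: the paper simply invokes Lemma \ref{lem 26} (PIE satisfies the hypotheses of Theorem \ref{thm:main}) and then part (2) of the Corollary to Theorem \ref{thm:main}, whose proof is exactly your completeness argument via the terminal object $E_Q=(Q,\mathrm{id})$ of PIE. One small imprecision worth noting: the general M\"obius inversion in an acyclic category reads $L(x)=\sum_{y}[y,x]\,e_y$ rather than $\sum_{y\le x}e_y$, since PIE is not a poset (Hom sets can have more than one element), but this does not affect your argument because at the terminal object one has $[y,E_Q]=1$ for all $y$ by Lemma \ref{lem:homtoe}(b).
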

  Finally, we present some closed-form expressions for certain values of the M\"obius function of PIE.

\section{Background}\label{sect:background}
A \key{quiver} (or directed graph) is given by $Q=(Q\verts, Q\arrows, s, t)$, where $Q\verts$ is a vertex set, $Q\arrows$ is an arrow set, and $s, t$ are functions from $Q\arrows$ to $Q\verts$ giving the start and terminal vertex of an arrow, respectively.  We assume $Q\verts$ and $Q\arrows$ are finite in this paper.
For any quiver $Q$ and field $K$, there is a category $\Rep_K (Q)$ of representations of $Q$ over $K$.  An object $V=(V_x,\varphi_\za)$ of $\Rep_K (Q)$ is an assignment of a finite dimensional $K$-vector space $V_x$ to each vertex $x \in Q\verts$, and an assignment of a $K$-linear map $\varphi_\za \colon V_{s\za} \to V_{t\za}$ to each arrow $\za \in Q\arrows$.  For any path $p$ in $Q$, we get a $K$-linear map $\varphi_p$ by composition.  Morphisms in $\Rep_K (Q)$ are given by linear maps at each vertex which form commutative diagrams over each arrow; see the book of Assem, Simson, and Skowro\'nski \cite{assemetal} for a precise definition of morphisms, and other fundamentals of quiver representations.  We will fix some arbitrary field $K$ throughout the paper and hence omit it from notation when possible.

There is a natural \key{tensor product} of quiver representations, induced by the tensor product in the category of vector spaces. More precisely, the tensor product of $V=(V_x, \varphi_\za)$ and $W=(W_x, \psi_\za)$ is defined pointwise:  the representation $V \otimes W = (U_x, \rho_\za)$ is given by
\begin{align*}
U_{x} := V_{x} \otimes W_{x} \qquad  &x \in Q\verts \\
\rho_{\za} := \varphi_{\za} \otimes \psi_{\za} \qquad &\za \in Q\arrows .
\end{align*}
It is not difficult to see that $\otimes$ is an additive bifunctor which is commutative and associative, and distributive over $\oplus$ (up to isomorphism).  In other words, this  gives the category $\Rep(Q)$ the structure of a \key{tensor category} in the sense of \cite{DMtannakian}.

The category $\Rep(Q)$ has the \key{Krull-Schmidt property} \cite[Theorem~I.4.10]{assemetal}, meaning that each $V \in \Rep(Q)$ has an essentially unique expression
\[
V \simeq \bigoplus_{i=1}^n V_i
\]
as a direct sum of indecomposable representations $V_i$.  That is, given any other expression $V \simeq \bigoplus \tilde{V}_i$ with each $\tilde{V}_i$ indecomposable, there is a permutation $\sigma$ of $\{1, \cdots n \}$ such that $\tilde{V}_i \simeq V_{\sigma i}$ for all $i$.  Thus the Clebsch-Gordan problem is well defined for $\rep(Q)$.

Since the tensor product distributes over direct sum,  to study $V \otimes W$ we can assume without loss of generality that $V$ and $W$ are indecomposable. A good starting point would then be to have a description of indecomposable objects in $\rep(Q)$.
But a description of all indecomposables is not available for most quivers, so we approach the problem by placing the representations of $Q$ inside a ring $R(Q)$, in which addition corresponds to direct sum and multiplication corresponds to tensor product  (the split Grothenieck ring of $\Rep(Q)$).
Analyzing the properties of $R(Q)$ (e.g. ideals, idempotents, nilpotents) gives a way of stating and approaching problems involving tensor products of quiver representations even in the absence of an explicit description of the isomorphism classes in $\Rep(Q)$.

Let $[V]$ denote the isomorphism class of a representation $V$.  Then define $R(Q)$ to be the free abelian group generated by isomorphism classes of representations of $Q$, modulo the subgroup generated by all $[V \oplus W] - [V] - [W]$.  The operation
\[
[V] \cdot [W] := [V \otimes W] \qquad \text{for }V,\ W \in \Rep(Q)
\]
induces a well-defined multiplication on $R(Q)$, making $R(Q)$ into a commutative ring, called the \key{representation ring} of $Q$.
The Krull-Schmidt property of $\Rep(Q)$ gives that $R(Q)$ is a free $\Z$-module with the indecomposable representations as a basis.
The ring $R(Q)$ generally depends on the base field $K$, but we omit $K$ from the notation since this is fixed in our case.  Also we usually omit the brackets $[\ ]$ and just refer to representations of $Q$ as elements of $R(Q)$.
Although we introduce ``virtual representations'' (those with some negative coefficient in the basis of indecomposables), every element $r \in R(Q)$ can be written as a formal difference
\begin{equation*}
r = V - W \qquad \text{with }V,\ W \in \Rep(Q) .
\end{equation*}
Then any additive (resp. multiplicative) relation $z = x + y$ (resp. $z = xy$) can be rewritten to give some isomorphism of actual representations of $Q$.

\begin{remark}
If one wishes to consider an ideal of relations $I$ for a quiver $Q$, the pointwise tensor product will not generally preserve these relations and thus not be defined for representations of the bound quiver $(Q, I)$.  
However, if $I$ is generated by commutativity relations (that is, relations of the form $p-q$ for paths $p, q$) then the representations of $(Q, I)$ do generate a subring of $R(Q)$.  If $I$ is generated by zero relations (relations of the form $p = 0$ for $p$ a path), then representations of $(Q,I)$ generate an ideal in $R(Q)$ since the tensor product of any map with a zero map is still zero.  The identity element of $R(Q)$ will not satisfy the zero relations, so the ring of representations satisfying $I$ will not generally have an identity element.
Thus, if $I$ consists of zero relations and commutativity relations, we can get a representation ring $R(Q,I)$ without identity.  Throughout the paper, we will not assume that the rings of representations that we work with have identity elements, and thus the term ``subring'' is taken to mean a non-empty subset of a ring which is closed under subtraction and multiplication (and possibly with a different identity element).
\end{remark}

\section{Projective representations}\label{sect:projreps}
Let $Q$ be a quiver without oriented cycles. For every vertex $x\in Q_0$, let $P(x)$ denote the indecomposable projective representation at $x$.  For any two vertices $x,y$, denote by $n_{xy}$ the number of paths from $x$ to $y$ in $Q$. The vector space $P(x)_y$ of the representation $P(x)$ at a vertex $y$ has a basis consisting of all paths from $x$ to $y$; thus $\textup{dim} \,P(x)_y=n_{xy}$.

We will first show in this section that the tensor product of two projective representations is projective, and then we compute the multiplicities $c_{xy}^z$ in the direct sum decompositions
\[P(x)\otimes P(y) =\bigoplus_{z\in Q_0} c_{xy}^z P(z).\]

\begin{lemma}
\label{lem 1}
The tensor product of two projectives is projective.
\end{lemma}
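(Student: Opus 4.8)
The plan is to describe $P(x) \otimes P(y)$ explicitly in terms of paths and identify it as a direct sum of indecomposable projectives, which suffices since a representation is projective if and only if it is a direct sum of the $P(z)$. Recall that $P(x)_y$ has basis the set of paths from $x$ to $y$, and for an arrow $\alpha \colon y \to y'$ the structure map $P(x)_\alpha \colon P(x)_y \to P(x)_{y'}$ sends a path $p$ to the path $\alpha p$. Therefore $\bigl(P(x) \otimes P(y)\bigr)_z = P(x)_z \otimes P(y)_z$ has basis the set of pairs $(p, q)$ where $p$ runs over paths from $x$ to $z$ and $q$ over paths from $y$ to $z$, and for an arrow $\alpha \colon z \to z'$ the structure map sends $(p,q) \mapsto (\alpha p, \alpha q)$.

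The key step is then a combinatorial bookkeeping argument. For each vertex $z$, consider the set $S_z$ of pairs of paths $(p,q)$ with $p\colon x \to z$, $q \colon y \to z$ that are \emph{not} simultaneously of the form $(\alpha p', \alpha q')$ for a common arrow $\alpha$ ending at $z$ and paths $p', q'$ ending at $s\alpha$ — i.e., the pairs that do not both factor through a single common final arrow. I would argue that the span of each such pair $(p,q) \in S_z$, together with all pairs $(rp, rq)$ obtained by post-composing both coordinates with a common path $r$ starting at $z$, forms a subrepresentation isomorphic to $P(z)$; and that $P(x) \otimes P(y)$ decomposes as the direct sum of these subrepresentations over all $z$ and all $(p,q) \in S_z$. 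To see the sum is direct and exhausts everything, note that any basis pair $(\tilde p, \tilde q)$ at any vertex can be written uniquely as $(r p, r q)$ where $r$ is the longest common suffix of $\tilde p$ and $\tilde q$ (the maximal common path $r$ such that $\tilde p = r p$, $\tilde q = r q$), and then $(p,q)$ lies in $S_z$ for $z$ the source of $r$ (equivalently the common vertex where $p$ and $q$ end). Uniqueness of this "longest common suffix" factorization is exactly what makes the decomposition well-defined and the sum direct.

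The main obstacle I expect is verifying cleanly that the subrepresentation generated by a pair $(p,q) \in S_z$ is genuinely isomorphic to $P(z)$ and not something smaller: one must check that the assignment $r \mapsto (rp, rq)$ (for $r$ a path starting at $z$) gives a \emph{linearly independent} family at each vertex, so that the obvious surjection $P(z) \twoheadrightarrow \langle (p,q)\rangle$ is an isomorphism. This holds because distinct paths $r \neq r'$ from $z$ give distinct paths $rp \neq r'p$ (left cancellation of paths in the path category of an acyclic quiver, using that $p$ is a fixed path with a well-defined source), so the first coordinates alone already separate them. Once this is in place, the direct sum decomposition $P(x) \otimes P(y) \cong \bigoplus_{z} \bigoplus_{(p,q) \in S_z} P(z)$ follows formally, proving projectivity; as a byproduct one reads off $c_{xy}^z = |S_z|$, which can then be massaged into the inclusion-exclusion count $n_{xz}n_{yz} - \sum_{\alpha \colon z' \to z} n_{xz'}n_{yz'}$ appearing in the Theorem.
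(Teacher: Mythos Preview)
Your proof is correct, but it takes a genuinely different route from the paper's. The paper proves the lemma by induction on the number of vertices of $Q$: one removes a sink $i_0$, applies the induction hypothesis to the restricted quiver $Q'$ to get a projective decomposition of $P(i)|_{Q'}\otimes P(j)|_{Q'}$, and then extends this back to $Q$ via an injectivity argument and a splitting short exact sequence. The multiplicity formula $c_{xy}^w = n_{xw}n_{yw} - \sum_{z\to w} n_{xz}n_{yz}$ is proved \emph{separately} afterwards (Theorem~\ref{thm 1}), by introducing the orthogonal idempotents $e(x)=P(x)-\sum_{x\to y}P(y)$ coming from the inverse Cartan matrix.

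Your approach, by contrast, gives an explicit combinatorial decomposition in one stroke: the ``longest common suffix'' factorization partitions the path-pair basis of $P(x)\otimes P(y)$ into pieces, each of which is visibly a copy of some $P(z)$, and the count $|S_z|$ recovers the multiplicity formula directly. This is more elementary and self-contained, and in fact your argument is essentially the linearized version of the fiber-product computation the paper carries out later in Proposition~\ref{prop 25} for $P$-type objects in the PIE category. The paper's route, on the other hand, has the pedagogical advantage of introducing the idempotents $e(x)$ explicitly, which is precisely the construction that motivates the M\"obius-ring machinery developed in the rest of the paper.
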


\begin{proof}
Since the tensor product is distributive over the direct sum, it is enough to show the statement for indecomposable projectives. Let $i,j$ be two vertices in $Q$. We need to show that $P(i)\otimes P(j)$ is projective.

We will proceed by induction on the number of vertices in $Q$. If this number is one, then $i=j$, and $P(i)$ is a  representation of dimension one, since $Q$ has no oriented cycles; and thus $P(i)\otimes P(i)=P(i)$ is projective.

Now suppose $Q$ has more than one vertex, and let $i_0$ be a sink in $Q$. If $i=i_0$ then $P(i)$ is the simple representation $S(i)$ and $P(i)\otimes P(j)$ is equal to $P(i)^{\oplus n_{ji}}$; in particular, it is equal to zero if there is no path from $j$ to $i$. This shows that the Lemma holds if $i=i_0$, and a similar argument shows that the Lemma holds if $j=i_0$.

Suppose now that $i$ and $j$ are different from $i_0$. Denote by $Q'$ the quiver obtained from $Q$ by deleting the vertex $i_0$ and all arrows incident to it.
Let $P(i)|_{Q'}$ be the representation of $Q'$ obtained by restricting  to the subquiver $Q'$. Since $i_0$ is a sink in $Q$, we have that $P(i)|_{Q'}$ is a projective $Q'$ representation and therefore the induction hypothesis implies that $P(i)|_{Q'}\otimes P(j)|_{Q'}$ is a projective $Q'$ representation, thus there is an isomorphism 
\[f\ :\ \bigoplus_k c_{ij}^k P_{Q'}(k)\ \longrightarrow \   P(i)|_{Q'}\otimes P(j)|_{Q'},\]
for some $c_{ij}^k\ge 0$ and $P_{Q'}(k)$ the indecomposable projective $Q'$ representation at vertex $k$.
Let $\tilde P=(\tilde P_x,\tilde \varphi_\za)_{i\in Q_0,\za\in Q_1}$ be the corresponding projective $Q$ representation, more precisely, $$\tilde P=\bigoplus_k c_{ij}^k P_{Q}(k).$$

Let us use the notation $P(i)\otimes P(j)=(M_x,\varphi_\za)_{x\in Q_0,\za\in Q_1}$. Then for every vertex $x$, the vector space $M_x$ has a basis consisting of pairs $(c^i,c^j)$, where $ c^i$ is any path from $i$ to $x$ 
and $ c^j$  any path from $j$ to $x$. 
On the other hand, since $i,j$ are both different from $i_0$, the vector space $M_{i_0}$ has a basis consisting of pairs $(c^i\za,c^j\zb)$, where $\za,\zb$ are  arrows with terminal point $i_0$, and $c^i$ is a path from $i$ to $s(\za)$ and $c^j$ is a path from $j$ to $s(\zb)$.
The maps $\varphi_\za$ are given by $\varphi_\za(c^i,c^j)=(c^i\za,c^j\za)$, in particular,
\[\bigoplus_{\za:x\to i_0} \varphi_\za: \bigoplus_{\za:x\to i_0} M_x\to M_{i_0}\] is injective.

The morphism $f$ induces a morphism $\tilde f=(\tilde f_x)_{x\in Q_0} :\tilde P\to P(i)\otimes P(j)$, where $\tilde f_x=f_x$ if $x\ne i_0$, and $\tilde f_{i_0}$ is defined on any path $c\za$, with $\za$ an arrow with $t(\za)=i_0$, as $\tilde f_{i_0}(c\za)=\varphi_\za\tilde f_{s(\za)}(c)$. Clearly, $\tilde f_x$ is an isomorphism for every $x\ne i_0$, and we will show that $\tilde f_{i_0}$ is injective.

Now in the commutative diagram
\[\xymatrix@C90pt@R30pt{  \bigoplus_{\za: t(\za)= i_0} \tilde P_{s(\za)} \ar[r]^{ \bigoplus_{\za: t(\za)= i_0}\tilde\varphi_\za} \ar[d]_{ \bigoplus_{\za: t(\za)= i_0}\tilde f_x} & \tilde P_{i_0} \ar[d]^{\tilde f_{i_0}} \\
 \bigoplus_{\za: t(\za)= i_0} M_{s(\za)} \ar[r]^{ \bigoplus_{\za: t(\za)= i_0}\varphi_\za} & M_{i_0}
}\]
the left column and the top row are isomorphisms, and the bottom row is injective. Therefore the right column $\tilde f_{i_0}$ is injective too.

Thus $\tilde f:\tilde P\to P(i)\otimes P(j)$ is injective with semisimple projective cokernel $P(i_0)^{\oplus t}$ for some integer $t$, and we get a short exact sequence
\[0\to\tilde P\to P(i)\otimes P(j)\to P(i_0)^{\oplus t}\to 0,\]
which splits, since $P(i_0)^{\oplus t}$  is projective. This shows that $P(i)\otimes P(j) $ is projective.  
\end{proof}

The lemma implies that the free abelian group generated by all indecomposable projectives $P(x)$, $x\in Q_0$ has a ring structure whose  addition is given by the direct sum and multiplication by the tensor product (i.e., the projectives span a subring of $R(Q)$). As an additive group, this is isomorphic to $\mathbb{Z}^{Q_0}$ and an isomorphism is given by the Cartan matrix
\[C=\left[n_{xy}\right]_{x,y\in Q_0} = \left[ \underline{\dim}P(1) \cdots \underline{\dim} P(n)\right],\]
where $n=\#Q_0$ and   
$\left[ \underline{\dim}P(1) \cdots \underline{\dim} P(n)\right]$ is the $n\times n$ integer matrix whose $x$-th column is equal to the dimension vector of $P(x)$.  The Cartan matrix is invertible.  Since the dimension vector is multiplicative with respect to the tensor product, this is a ring isomorphism.

 We also have that the $(x,y)$ entry of the transposed inverse matrix $(C^{-1})^t$ can be computed by the formula $\dim \Hom (S(x),S(y))-\dim \Ext(S(x),S(y))$,  see for example
  \cite[III.3.13]{assemetal}. Therefore
 \[ (C^{-1})^t_{x,y}=\left\{ \begin{array}{ll} 1 &\textup{if $x=y$;} \\ - (\textup{number of arrows $x\to y$}) & \textup{if $x\ne y.$}
\end{array}\right.\] 

Let $\ze_x$ denote the standard basis vector $[0,\ldots,0,1,0,\ldots,0]^t$ with $1$ at position $x$, and define $e(x)$ to be the inverse image  of $\ze_x$ under the above isomorphism.
In other words

\[e(x)=\left[ P(1)\cdots P(n)\right] C^{-1} \ze_x ,\]
where $\left[ P(1)\cdots P(n)\right] $ denotes the $1\times n$ matrix whose entries are the indecomposable projective modules, and $C^{-1} \ze_x$ is the $x$-{th}  column of $C^{-1}$.
 
It follows that 
\begin{equation}\label{eq 1.1}
e(x) = P(x)-\sum_{x\to y} P(y)
\end{equation} 
where the sum is over all arrows starting at $x$, and

\begin{equation}\label{eq 1.2} 
P(x)=\sum_z n_{xz} e(z).
\end{equation}

We are now ready to prove the main result of this section.
\begin{theorem}\label{thm 1} Let $x,y\in Q_0$, then
\[P(x)\otimes P(y) =\bigoplus_{w\in Q_0} c_{xy}^w P(w),\]
with $c_{xy}^w=n_{xw}n_{yw} - \sum_{z\to w} n_{xz}n_{yz}$, where the sum is over all arrows with terminal vertex $w$.
\end{theorem}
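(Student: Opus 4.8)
\emph{Proof proposal.} The plan is to pass to the basis of virtual projectives $\{e(z)\}_{z\in Q_0}$ given by \eqref{eq 1.1}, in which multiplication is trivial, carry out the product there, and then translate back to the $\{P(w)\}$ basis using \eqref{eq 1.2} and \eqref{eq 1.1}.

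First I would upgrade the observation, already recorded above, that the dimension vector is multiplicative with respect to $\otimes$: extending $\underline{\dim}$ linearly gives a ring homomorphism from the subring of $R(Q)$ spanned by the projectives to $\mathbb{Z}^{Q_0}$ with componentwise multiplication, and this is precisely the isomorphism ``given by the Cartan matrix'' appearing above. Since $e(z)$ is by construction the preimage of the standard basis vector $\ze_z$, and the $\ze_z$ are orthogonal idempotents in $\mathbb{Z}^{Q_0}$, injectivity of $\underline{\dim}$ on this subring yields
\[
e(z)\cdot e(w) = \delta_{zw}\, e(z)
\]
in $R(Q)$; that is, the $e(z)$ form a set of orthogonal idempotents.

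Now I would compute directly. Writing $P(x)=\sum_z n_{xz}e(z)$ and $P(y)=\sum_w n_{yw}e(w)$ via \eqref{eq 1.2}, the orthogonality relation gives
\[
P(x)\otimes P(y) \;=\; \Big(\sum_{z} n_{xz}\, e(z)\Big)\Big(\sum_{w} n_{yw}\, e(w)\Big) \;=\; \sum_{z\in Q_0} n_{xz} n_{yz}\, e(z).
\]
Substituting $e(z) = P(z) - \sum_{z\to v} P(v)$ from \eqref{eq 1.1} and collecting terms, the coefficient of $P(w)$ is $n_{xw}n_{yw}$ coming from the $z=w$ summand, minus $n_{xz}n_{yz}$ for each arrow $z\to w$; hence the coefficient of $P(w)$ is exactly $n_{xw}n_{yw} - \sum_{z\to w} n_{xz}n_{yz} = c_{xy}^w$, as claimed.

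Once Lemma \ref{lem 1} and the construction of the idempotents $e(z)$ are in place, there is no real obstacle remaining: the argument is a change-of-basis computation. The two small points worth stating explicitly are that the Cartan-matrix isomorphism respects multiplication (which is what makes the $e(z)$ orthogonal idempotents), and that, since $P(x)\otimes P(y)$ is an honest projective representation by Lemma \ref{lem 1} while the transition between the $\mathbb{Z}$-bases $\{P(z)\}$ and $\{e(z)\}$ is invertible over $\mathbb{Z}$, the integers $c_{xy}^w$ produced here are automatically nonnegative and are genuinely the multiplicities in the direct-sum decomposition.
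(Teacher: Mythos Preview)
Your argument is correct and follows essentially the same route as the paper: expand $P(x)$ and $P(y)$ in the orthogonal idempotent basis $\{e(z)\}$ via \eqref{eq 1.2}, multiply, and convert back using \eqref{eq 1.1}. Your added justification that the $e(z)$ are orthogonal idempotents (via the ring isomorphism given by the Cartan matrix) and your closing remark that Lemma~\ref{lem 1} guarantees the resulting $c_{xy}^w$ are nonnegative multiplicities are both welcome elaborations of points the paper leaves implicit.
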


\begin{proof}
The proof is a simple computation in the representations ring with the orthogonal idempotents $\{e(z)\mid z\in Q_0\}$. 
We have
\[\begin{array}{rcl}
P(x)\otimes P(y) &=& \sum_z n_{xz} e(z) \sum_z n_{yz} e(z)\\
&=&\sum_z n_{xz}n_{yz} e(z) \end{array}\] 
since the $e(z) $ are orthogonal idempotents. 
Now  using equation (\ref{eq 1.1}), we get

\[\begin{array}{rcl}
P(x)\otimes P(y) &=&\sum_z n_{xz}n_{yz}\left(P(z)-\sum_{z\to u} P(u)\right).
\end{array}\]
For a fixed vertex $w$, we can compute $c_{xy}^w$ by collecting terms and get $c_{xy}^w =n_{xw}n_{yw} - \sum_{z\to w} n_{xz}n_{yz}$, where the sum is over all arrows with terminal vertex $w$. This completes the proof. 
\end{proof}

\section{Linearization and M\"obius rings}
\subsection{Quivers over $Q$ and Linearization}\label{sect:quivq}

A \emph{morphism of quivers} $f' \colon Q' \to Q$ sends vertices to vertices and arrows to arrows, and satisfies $s( f'(\za)) = f' (s(\za))$ and $t (f' (\za)) = f' (t(\za))$ 
for each arrow $\za \in Q'\arrows$.
A \key{quiver over $Q$} is a pair $(Q', f')$ where $Q'$ is a quiver, and $f' \colon Q' \to Q$ is a morphism of quivers called the \key{structure map} of $(Q', f')$.  A  \emph{morphism $g$ of quivers over $Q$} is a morphism of quivers which commutes with the structure maps to $Q$:
\begin{equation}\label{eq:quivhom}
\vcenter{\hbox{
\begin{tikzpicture}[>=latex,description/.style={fill=white,inner sep=2pt}] 
\matrix (m) [matrix of math nodes, row sep=3em, 
column sep=2.5em, text height=1.5ex, text depth=0.25ex] 
{  Q' &  & Q''  \\ 
 & Q &  \\ }; 
\path[->,font=\scriptsize] 
(m-1-1) edge node[auto,swap] {$ f' $} (m-2-2)
(m-1-3) edge node[auto] {$ f'' $} (m-2-2)
(m-1-1) edge node[auto] {$ g $} (m-1-3);
$\circlearrowleft$;
\end{tikzpicture}}} .
\end{equation}
So the collection of all quivers over a given $Q$ forms a category denoted by $\quivq$, and we write $g \in \Hom_{\quivq} (Q', Q'')$.

To simplify the notation, we consider the maps $\varphi_\za$ of a representation $V$ to be defined on the total vector space $\bigoplus_{x \in Q\verts} V_x$ by taking $\varphi_\za (V_y) = 0$  when $y \neq s(\za)$.
If $f':Q'\to Q$ is a morphism of quivers then 
the \key{pushforward} $f'_* V=(U_x, \rho_\za) \in \Rep(Q)$ of a representation $V=(V_x, \varphi_\za) \in \Rep(Q')$ is given by
\begin{align}
U_x := \bigoplus_{y \in f'^{-1}(x)} V_y \qquad x \in Q\verts \\
\label{eq:pfmaps}
\rho_\za := \sum_{\zb \in f'^{-1}(\za)} \varphi_\zb \qquad \za \in Q\arrows .
\end{align}

Extending $f'_*$ linearly to $R(Q')$, we get an induced homomorphism $f'_* \colon R(Q') \to R(Q)$ between additive groups, which will not generally be a ring homomorphism.

For a quiver $Q$, we denote by $\mathbb{1}_Q\in \Rep(Q)$ the \key{identity representation} of $Q$: it has a one-dimensional vector space $K$ at each vertex, and the identity map over each arrow.  (The name comes from the fact that this is the identity element of the representation ring $R(Q)$).  When $S \subset Q$ is a subquiver, we can consider $\mathbb{1}_S$ to be a representation of $Q$ via extension by zero: that is, we assign the zero map or vector space to each arrow or vertex outside of $S$.  More generally, we can take any quiver over $Q$ and get a representation of $Q$ by pushing forward the identity representation.  Thus we get a map on objects
\[ \xymatrix@C=30pt@R=0pt{\lin\colon\quivq\ \ar[r]&\Rep(Q) \\
(Q',f') \ar@{|->}[r] & f'_* \mathbb{1}_{Q'}}
\]
which we call the \key{linearization} map.  The representation $f'_* \mathbb{1}_{Q'}$ has a standard basis $\setst{e_x}{x \in Q'\verts}$.
For example, when $(Q',f')$ is the inclusion of a single vertex in $Q$, then its linearization is the simple representation concentrated at that vertex.  When $Q'$ is a quiver of type $A$ with some technical conditions on $f'$, the linearization is a string module.  Similarly, we get a band module or tree module when $Q$ is of type $\tilde{A}$ or when it is a tree, respectively.

\begin{remark}
There is a natural way that one would try to make the linearization functorial: if $g$ is a morphism in $\quivq$ as illustrated in (\ref{eq:quivhom}), one might try to send a standard basis vector $e_x$ of $f'_* \mathbb{1}_{Q'}$ to the vector $e_{g(x)}$ in $f''_* \mathbb{1}_{Q''}$.
However, this will \emph{not} be a morphism of quiver representations, in general.  To see this, one need only take $Q = \bullet \to \bullet$ and consider the map of quivers given by the inclusion of the left vertex.  The corresponding map of vector spaces just described would be a nontrivial morphism from the simple representation of dimension vector $(1,0)$ to the indecomposable of dimension vector $(1,1)$, which is not possible.    By working in some (not necessarily full) subcategory of $\quivq$, one may have some success in making the linearization  functorial (see for example \cite{Crawley-Boevey:1989rr} and \cite[Theorem~18]{kinserrootedtrees}).
\end{remark}


The categorical product of two objects $(Q',f') \times_Q (Q'',f'')$ exists in $\quivq$, which we refer to as the \key{fiber product} of $Q'$ and $Q''$ over $Q$.  It can be realized concretely as having vertex set
\[
(Q' \times_Q Q'')_0 = \setst{(x', x'') \in Q'_0 \times Q''_0}{f'(x') = f''(x'')}
\]
consisting of pairs of vertices lying over the same vertex of $Q$, with an arrow
\[
(x',x'') \xrightarrow{(\za', \za'')} (y',y'')
\]
for each pair of arrows $(x' \xrightarrow{\za'} y', x'' \xrightarrow{\za''} y'') \in Q'_1 \times Q''_1$ such that $f'(\za') = f''(\za'')$.  This common value should be taken as the value of the structure map on the arrow $(\za', \za'')$.


\subsection{Acyclic categories and the M\"obius function}\label{sect:acycliccats}
In order to use an inclusion/exclusion technique to orthogonalize elements of the representation ring, we need a categorial analogue of M\"obius inversion.  This is provided by the work of Haigh \cite{haighmobiusalgebra}, and one may also see the more recent works \cite{MR2393085} and \cite[Ch.~10]{MR2361455}.  We summarize here the tools that we need from this construction.

Following the terminology of Kozlov's book, we call a small category \key{acyclic} if the only endomorphisms are identity morphisms and only identity morphisms are invertible.    This terminology is justified by the observation that if we draw a directed graph whose vertices are the objects and arrows are the morphisms of an acyclic category, then this graph will be acyclic.  For brevity, we denote by $[x,y]_\catc$ the number of morphisms from an object $x$ to an object $y$ in $\catc$.  An acyclic category $\catc$ with finitely many objects $\catc_0$ and morphisms $\catc_1$ admits a \key{M\"obius function}
\begin{equation*}
\mu_\catc \colon \catc_0 \times \catc_0 \to \Z
\end{equation*}
with the following properties:
\begin{align*}
\mu_\catc (x,x) &= \quad 1 \quad \text{for all }x\\
 \sum_{z \in \catc_0} [x,z]_\catc \,\mu_\catc(z,y) &= \left\{ \begin{array}{ll} 0 &    \text{for }x\neq y; 
 \\
 1 &  \text{for }x=y .\end{array}\right.
\end{align*}
We drop the subscripts $\catc$ when this can cause no  confusion.

For example, when $\catc$ is a poset (whose elements are taken to be the objects of $\catc$, and with a unique morphism from $x$ to $y$ if and only if $x \leq y$), we get exactly the classical M\"obius function of the poset \cite[Section~3.7]{stanleyenumcombin}.

For any acyclic category $\catc$, let $H_\catc$ be the  \key{Hom matrix} associated to $\catc$, whose rows and columns are indexed by the objects of $\catc$,
such that the entry $H_{xy}$ in row $x$ and column $y$ is $[x,y]$.  One can choose an ordering of the objects of $\catc$ such that this matrix is upper triangular with ones on the diagonal, since $\catc$ is acyclic, and then one can see from the definition of matrix multiplication that $M\defeq H^{-1}$ will have the value $\mu(x,y)$ in row $x$, column $y$.  

A few facts which will be used frequently  are noted here:
\begin{enumerate}[(a)]
\item From the matrix description we see that 
\[
\sum_{z \in \catc_0} \mu (x,z)[z,y] = 0
\]
 for all $x\neq y$.
\item If $[x,y] = 0$, then $\mu(x,y) = 0$.
\item The value $\mu(x,y)$ can be recursively calculated as
\begin{equation}\label{eq:muformula}
\mu(x,y) = - \sum_{x < z \leq y} [x,z] \mu(z,y)
\end{equation}
where we write $x \leq y$ if there exists a morphism from $x$ to $y$.
\end{enumerate}

\subsection{The M\"obius ring of a finite acyclic category}
The \key{M\"obius ring} $M(\catc)$ of an acyclic category $\catc$ \cite{haighmobiusalgebra} generalizes an object of the same name associated to a poset \cite{greenemobiusalgebra}.  The additive group of $M(\catc)$ is free on the set of objects of $\catc$.  A direct (but somewhat opaque) definition of the product $xy$ of two basis vectors can be given, but we will first give a more computationally useful formulation.  For each object $x$ of $\catc$, define an element
\begin{equation}\label{eq:deltax}
\delta_x \defeq \sum_{z \in \catc_0} \mu(z,x) z
\end{equation}
in $M(\catc)$.  The additive group of $M(\catc)$ is freely generated by $\{\delta_x \}_{x \in \catc_0}$ also, since the Hom matrix and its inverse (which have determinant 1) give the change of basis between this and the defining basis.  Then we just declare these basis elements to be orthogonal idempotents in $M(\catc)$:
\begin{equation}
\delta_x \delta_y =
\begin{cases}
\delta_x 	& \textup{if}\ x= y,\\
0		& \textup{if}\ x \neq y,\\
\end{cases}
\end{equation}
and extend by $\Z$-linearity (so $M(\catc)$ is commutative).  We can recover the original basis elements as
\begin{equation}\label{eq:xfromdelta}
x = \sum_{z \in \catc_0} [z,x] \delta_z ,
\end{equation}
and by substitution the product of two such elements is then
\begin{equation}\label{eq:xymult}
xy = \sum_{z \in \catc_0} \left( \sum_{w \in \catc_0} \mu(z,w)[w,x][w,y] \right) z ,
\end{equation}
recovering the standard definition.

\begin{lemma}
If $x$ is a terminal object for $\catc$ (i.e., each object of $\catc$ has a unique morphism to $x$), then $x$ serves as the identity element of $M(\catc)$.
\end{lemma}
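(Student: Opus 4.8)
The plan is to reduce the claim to checking that $x\cdot y = y$ for every basis vector $y \in \catc_0$. This is legitimate because the product on $M(\catc)$ is $\Z$-bilinear and commutative, so an element that acts as a left identity on all basis vectors is automatically a two-sided identity. The single input drawn from the hypothesis is that $x$ being terminal means $[z,x] = 1$ for \emph{every} object $z \in \catc_0$; from there the proof is a short computation in the orthogonal-idempotent basis $\{\delta_z\}$.

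First I would use formula (\ref{eq:xfromdelta}) to expand $x$ in the $\delta$-basis: $x = \sum_{z\in\catc_0}[z,x]\,\delta_z = \sum_{z\in\catc_0}\delta_z$, using $[z,x] = 1$ for all $z$. Expanding $y = \sum_{w\in\catc_0}[w,y]\,\delta_w$ the same way and multiplying, the defining relations $\delta_z\delta_w = 0$ for $z\neq w$ and $\delta_z\delta_z = \delta_z$ collapse the double sum:
\[
x\cdot y \;=\; \Big(\sum_{z\in\catc_0}\delta_z\Big)\Big(\sum_{w\in\catc_0}[w,y]\,\delta_w\Big) \;=\; \sum_{z\in\catc_0}[z,y]\,\delta_z \;=\; y,
\]
where the last equality is again (\ref{eq:xfromdelta}). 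Hence $x$ is the identity element of $M(\catc)$.

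An equivalent route avoids passing through the $\delta$-basis: substitute $[w,x] = 1$ into the explicit product formula (\ref{eq:xymult}) to get $x\cdot y = \sum_{z\in\catc_0}\big(\sum_{w\in\catc_0}\mu(z,w)[w,y]\big)\,z$, and observe that the inner sum is the $(z,y)$ entry of $H^{-1}H = I$ (with $H$ the Hom matrix of $\catc$), hence equals $1$ when $z = y$ and $0$ otherwise, so $x\cdot y = y$. There is essentially no serious obstacle here; the only point requiring care is that ``terminal'' is used in the strict categorical sense --- a \emph{unique} morphism from each object --- which is exactly what upgrades $[z,x]\ge 1$ to $[z,x] = 1$ and makes the argument work. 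A merely weakly terminal object would not suffice.
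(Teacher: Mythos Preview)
Your proposal is correct, and your ``equivalent route'' via substituting $[w,x]=1$ into (\ref{eq:xymult}) and invoking the defining M\"obius identity is exactly the paper's proof. Your primary argument through the $\delta$-basis is just the same computation one step upstream of (\ref{eq:xymult}), so there is no meaningful difference in approach.
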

\begin{proof}
If $[w,x] =1$ for all $w \in \catc_0$, the formula (\ref{eq:xymult}) simplifies to
\[
xy = \sum_{z \in \catc_0} \left( \sum_{w \in \catc_0} \mu(z,w)[w,y] \right) z .
\]
The second sum is always 0 unless $z=y$, by fact (a) of the previous subsection, and 1 when $z=y$, thus we have $xy=y$ for all $y \in \catc_0$.
\end{proof}

\begin{remark}\label{rem:cfinite}
The finiteness of $\catc$ can be relaxed in various ways.  For example, the definition (\ref{eq:deltax}) still makes sense if, for each object $x$, there are only finitely many objects $z$ such that $[z,x] \neq 0$.
\end{remark}

\section{Main result on M\"obius rings}\label{sect main}

Let $\catc$ be a full, acyclic subcategory of $\quivq$. From here on, we will always assume that  each object of $\catc$ is a connected quiver over $Q$. Let $L:\catc \to\rep Q$ be the linearization, which we recall is defined only on the objects of $\catc$.  Then $L$ extends by $\Z$-linearity to a map $M(\catc)\to R(Q)$, which we also denote by $L$. In this section, we will show that $L$ is a ring homomorphism when $\catc$ satisfies suitable conditions, and study the image of $L$ in $R(Q)$. We give sufficient conditions on the category $\catc$ so that this subring is isomorphic to the M\"obius ring $M(\catc)$ of the category $\catc$ and construct a basis of idempotents in that case.

We say that the category $\catc$ is \emph{closed under fiber products} if the fiber product of quivers in $\catc$ is a disjoint union of quivers in $\catc$.
We need one more technical condition for linearization to behave well with respect to tensor product.  Following the terminology of \cite{Herschend:2009kx}, we say that a morphism of quivers $f' \colon Q' \to Q$ is a \key{wrapping} if, for every pair of vertices $i',j' \in Q_0'$, the induced map
\[
\{ \textup{arrows from $i'$ to $j'$}\} \xrightarrow{f'} \{ \textup{arrows from $f'(i')$ to $f'(j')$} \}
\]
is injective.  Intuitively, this says that $f'$ does not collapse parallel arrows.  The fiber  product of two wrappings is again a wrapping.

\begin{theorem}
\label{thm:main}
Let $\catc$ be an acyclic subcategory of $\quivq$ whose objects are connected and wrappings, which is closed under fiber products, and  such that for all $x,y\in \catc$, 
\begin{equation}\label{eq 1.3}
\textup{
 $L(x)$ is indecomposable in $\rep Q$ and $L(x) \not\simeq L(y)$ if $x\ne y$.}
\end{equation}
Then the subring of $R(Q)$ generated by $L(\catc)$ is isomorphic to the M\"obius ring $M(\catc)$ of $\catc$.
\end{theorem}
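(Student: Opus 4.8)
The plan is to build an explicit isomorphism $M(\catc) \to R(Q)$ using the linearization map $L$ and the distinguished idempotent basis $\{\delta_x\}_{x \in \catc_0}$ of $M(\catc)$. The key point is that the images $L(\delta_x)$ under the linear extension of $L$ should turn out to be orthogonal idempotents in $R(Q)$, which would simultaneously show that $L$ (as a $\Z$-module map) restricts to a ring isomorphism onto the subring it generates. So the first step is to set up notation: write $e_x := L(\delta_x) = \sum_{z \in \catc_0} \mu(z,x)\, L(z) \in R(Q)$, using the defining formula \eqref{eq:deltax}. Dually, from \eqref{eq:xfromdelta} we get $L(x) = \sum_{z} [z,x]\, e_z$, so the $e_x$ span the same subring as the $L(x)$, and it suffices to prove the $e_x$ are orthogonal idempotents and $\Z$-linearly independent. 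Linear independence is immediate from the fact that the Hom matrix $H_\catc$ has determinant $1$ (it changes basis between $\{L(x)\}$ and $\{e_x\}$ over $\Z$), so $L|_{M(\catc)}$ is injective provided the $L(x)$ are $\Z$-linearly independent in $R(Q)$ — and that is exactly where hypothesis \eqref{eq 1.3} enters, since distinct indecomposables form part of a $\Z$-basis of $R(Q)$.

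The heart of the argument is showing $e_x e_y = \delta_{xy}\, e_x$ in $R(Q)$ (Kronecker delta). Here is where Herschend's theorem on linearization and fiber products is essential: because every object of $\catc$ is a connected wrapping and $\catc$ is closed under fiber products, we have $L(x) \otimes L(y) \simeq L(x \times_Q y)$, where $x \times_Q y$ decomposes as a disjoint union $\bigsqcup_i w_i$ of objects $w_i \in \catc_0$, and linearization sends disjoint unions to direct sums, so $L(x)L(y) = \sum_i L(w_i)$ in $R(Q)$. I would then want to reconcile this with the multiplication rule \eqref{eq:xymult} in $M(\catc)$, i.e. show $x \cdot y$ in $M(\catc)$ equals $\sum_i w_i$. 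The cleanest route: note that morphisms in $\quivq$ out of a fiber product are governed by the universal property, so $[w, x \times_Q y]_\catc$ counts pairs of morphisms $(w \to x,\ w \to y)$, which gives $[w, x\times_Q y] = [w,x]\,[w,y]$; summing over the connected components $w_i$ and using that $\catc$ is a full subcategory (so $[w, \bigsqcup_i w_i] = \sum_i [w, w_i]$ when $w$ is connected), we get $\sum_i [w, w_i] = [w,x][w,y]$ for every connected $w \in \catc_0$. Then expanding $\sum_i w_i = \sum_z (\sum_i [z,w_i])\, \delta_z = \sum_z [z,x][z,y]\, \delta_z$ and comparing with \eqref{eq:xymult} (which says $xy = \sum_z (\sum_w \mu(z,w)[w,x][w,y])\delta_z = \sum_z [z,x][z,y]\,\delta_z$ after simplification via the defining property of $\mu$) shows $xy = \sum_i w_i$ in $M(\catc)$. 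Applying $L$, we conclude $L(xy) = L(x)L(y)$, i.e. $L$ is multiplicative on basis elements, hence a ring homomorphism; combined with injectivity this finishes the proof.

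I expect the main obstacle to be the bookkeeping around Herschend's theorem — specifically, verifying that its hypotheses are genuinely met in the form we need (connected wrappings, the fiber product decomposing into objects \emph{of $\catc$} rather than merely quivers over $Q$, and compatibility of $L$ with disjoint unions), and handling the subtlety that $L$ is a priori only defined on objects, so ``$L$ is a ring homomorphism'' has to be interpreted as a statement about the $\Z$-linear extension together with the verified identity $L(x)L(y) = \sum_i L(w_i)$. A secondary technical point: one must make sure the fiber product $x \times_Q y$ has only \emph{finitely} many components and that the $w_i$ need not be distinct — multiplicities are handled correctly by working in $R(Q)$ and $M(\catc)$ as rings rather than just with underlying sets. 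Once the identity $[w, x \times_Q y] = [w,x][w,y]$ for connected $w$ is nailed down (this is just the universal property of the product plus connectedness forcing a map from $w$ to land in a single component), the rest is the determinant-one change of basis argument, which is routine.
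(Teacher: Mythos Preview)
Your proposal is correct and follows essentially the same route as the paper's proof: both use condition \eqref{eq 1.3} for injectivity of the $\Z$-linear extension of $L$, invoke Herschend's result to get $L(x)L(y)=\sum_i L(w_i)$ for the connected components $w_i$ of $x\times_Q y$, use the universal property of the fiber product together with connectedness of $z$ to obtain $[z,x][z,y]=\sum_i[z,w_i]$, and then compare in the $\delta$-basis. The only cosmetic difference is that you package the conclusion as ``$xy=\sum_i w_i$ in $M(\catc)$, then apply $L$,'' whereas the paper computes $L(xy)$ and $L(x)L(y)$ separately and matches the two expressions; these are the same argument.
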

\begin{proof}
The M\"obius ring $M(\catc)$ has the two $\mathbb{Z}$-bases 
\[ \{x\mid x\in \catc\} \quad \textup{and} \quad \{\zd_x=\sum_{z\in \catc_0} \mu(z,x) z \mid x\in \catc\}.
\]
Consider the linearization map 
\[ L :M(\catc)\longrightarrow R(Q), \ x=(Q',f')\mapsto L(x)=f_*'\mathbb{1}_{Q'}.
\]
We will show that $L$ is an injective ring homomorphism. 

The map $L $ is additive by definition, and by condition (\ref{eq 1.3}), $L$ is injective.
In $M(\catc)$ the product is given by $xy=\sum_{z\in \catc_0} [z,x][z,y]  \zd_z$, for $x,y\in \catc$, using the basis of orthogonal idempotents.
Now let  $x\times_Q y=\sqcup_i w_i$ be the decomposition into connected components, where each $w_i\in \catc$.
For a fixed $z$, the set of pairs of maps $\{(z\xrightarrow{f} x,\, z \xrightarrow{g} y)\}$ is in bijection with the set of maps $\bigcup_i \{z\xrightarrow{h} w_i\}$, by the universal property of fiber products and the assumption that elements of $\catc$ are connected quivers.
This implies that $ [z,x][z,y] =\sum_i [z,w_i]$ and so after applying $L$ we have that
\[
L(xy)=
\sum_{z\in \catc_0} \sum_i [z,w_i]  L(\zd_z). \]

On the other hand, $L(x)\otimes L(y) $ is isomorphic to the linearization of $x\times_Q y$, by \cite[Corollary~1]{Herschend:2009kx} (which requires that $x,y$ be wrappings).  In the representation ring $R(Q)$, this gives $L(x)L(y) = \sum_i L(w_i)$.
Now since we already know $L$ is a homomorphism of additive groups, we can use formula \eqref{eq:xfromdelta} to obtain
\[
\sum_i L(w_i) = \sum_i\sum_{z \in \catc_0} [z,w_i]\,L(\zd_z).
\]
This shows that $L $ is a ring homomorphism, and moreover, the image of $L$ is the subring of $R(Q)$ generated by $L(\catc)$, thus it is isomorphic to $M(\catc)$.
\end{proof}

\begin{corollary} Under the assumptions of Theorem \ref{thm:main}, we have the following:
\begin{enumerate}
\item  The subring of $R(Q)$ generated by $L(\catc)$ has a basis of orthogonal idempotents:
\[B=\left\{L(\zd_x)\mid x\in \catc\right\}. \]
\item  When $(Q, id) \in \catc$, this results in a direct product decomposition
\[R(Q) \cong \prod_{x\in\catc}\langle L(\zd_x)\rangle,\]
where $\langle L(\zd_x)\rangle$ is the principal ideal of $R(Q)$ generated by $L(\zd_x)$.
\end{enumerate}
\end{corollary}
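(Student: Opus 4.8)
The plan is to derive both parts directly from Theorem~\ref{thm:main} together with the elementary theory of complete systems of orthogonal idempotents. Part~(1) is essentially immediate: by the construction of the M\"obius ring, $\{\delta_x \mid x \in \catc_0\}$ is a $\mathbb{Z}$-basis of $M(\catc)$ consisting of orthogonal idempotents, and Theorem~\ref{thm:main} identifies $L$ with a ring isomorphism from $M(\catc)$ onto the subring $S$ of $R(Q)$ generated by $L(\catc)$. Since a ring isomorphism sends a $\mathbb{Z}$-basis to a $\mathbb{Z}$-basis and preserves the multiplication table, $B = \{L(\delta_x) \mid x \in \catc_0\}$ is a $\mathbb{Z}$-basis of $S$ made of orthogonal idempotents.

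For part~(2), assume in addition that $(Q,\mathrm{id}) \in \catc$. First I would observe that $(Q,\mathrm{id})$ is a terminal object of $\quivq$: by the commutativity required in \eqref{eq:quivhom}, the only morphism $(Q',f') \to (Q,\mathrm{id})$ is $f'$ itself. As $\catc$ is a full subcategory, $(Q,\mathrm{id})$ is therefore terminal in $\catc$, so $[z,(Q,\mathrm{id})] = 1$ for every $z \in \catc_0$, and formula \eqref{eq:xfromdelta} gives $(Q,\mathrm{id}) = \sum_{z \in \catc_0} \delta_z$ in $M(\catc)$. Applying $L$, and computing directly from the definition of the pushforward that $L(Q,\mathrm{id}) = (\mathrm{id})_*\mathbb{1}_Q = \mathbb{1}_Q$, which is the identity element of $R(Q)$, we obtain $\sum_{x \in \catc_0} L(\delta_x) = 1$ in $R(Q)$. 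Combined with part~(1), this shows that $\{L(\delta_x) \mid x \in \catc_0\}$ is a complete system of orthogonal idempotents of the full ring $R(Q)$.

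It then remains to invoke the standard Peirce decomposition: if $R$ is a commutative ring with identity carrying orthogonal idempotents $e_1, \dots, e_m$ with $\sum_i e_i = 1$, then $r \mapsto (re_i)_i$ is a ring isomorphism $R \to \prod_i Re_i$, where each $Re_i$ is a ring with identity $e_i$ and coincides with the principal ideal $\langle e_i \rangle$ of $R$. Taking $R = R(Q)$ and $e_x = L(\delta_x)$ gives the asserted product decomposition. There is no real obstacle in this argument; the substance lies entirely in Theorem~\ref{thm:main} and in the identification of $(Q,\mathrm{id})$ as a terminal object, everything else being bookkeeping. If anything, the step meriting care is the remark that the subring $S$ need not be all of $R(Q)$: the decomposition nevertheless captures all of $R(Q)$ precisely because the idempotents $L(\delta_x)$ sum to $1_{R(Q)}$ and the ideals $\langle L(\delta_x) \rangle$ are formed inside $R(Q)$. (If one wishes to allow an infinite $\catc$ via the relaxation of Remark~\ref{rem:cfinite}, one should additionally check that the sums and product appearing above remain meaningful.)
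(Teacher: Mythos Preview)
Your proof is correct and follows essentially the same approach as the paper's own argument. The paper's proof is more terse---it simply says (1) is immediate from the theorem and (2) follows because $L(Q,\mathrm{id})=1_{R(Q)}$ gives $1=\sum_x L(\delta_x)$ as a sum of orthogonal idempotents---but you have unpacked exactly the same reasoning, including the identification of $(Q,\mathrm{id})$ as terminal (which the paper invokes implicitly via the earlier lemma that a terminal object is the identity of $M(\catc)$) and the Peirce decomposition.
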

\begin{proof}
Statement (1) is immediate from the theorem. Then statement (2) follows because the identity element of $R(Q)$ is the linearization of the identity element $(Q,id)$ of $M(\catc)$, so $1=\sum_x L(\zd_x)$ is a decomposition as a sum of orthogonal idempotents in $R(Q)$.
\end{proof}

\section{The PIE category}\label{sect:piecat}

In Section \ref{sect:projreps} we have seen that the projective representations of an acyclic quiver $Q$ span a subring of $R(Q)$, in which multiplication can more easily be carried out using a basis of orthogonal idempotents.  The duality functor gives a ring isomorphism $R(Q)\cong R(Q^{\rm op})$, so the same can be said for the injective representations of $Q$.  In \cite[\S~4.1]{kinserrootedtrees}, a similar construction is carried out for the collection of idempotent representations of $Q$ (those which are the identity representation of some subquiver).

So the natural question arises as to whether these three sets of idempotents in $R(Q)$ have a common refinement.  That is, we would like to find a subring of $R(Q)$ containing a complete set of orthogonal idempotents which span the set of projective, injective, and idempotent representations.  The first problem one encounters is that the tensor product of a projective with an idempotent representation (which results in the restriction of the projective to a subquiver) is not necessarily projective, injective, or idempotent.  So we need to enlarge the scope of representations that we look at.

\subsection{Subprojective and subinjective representations}
Recall that the \key{support} of a representation $V$ of $Q$, written $\supp V$, is the subquiver of $Q$ consisting of the vertices to which $V$ assigns a nonzero vector space, and the arrows to which $V$ assigns a nonzero map.  For an object $X=(Q',f')$ of $\quivq$, we define $\supp X = f'(Q')$, so that $\supp X = \supp L(X) \subseteq Q$ when $X$ is a wrapping.

\begin{definition}
A representation $V$ of a quiver is called \key{subprojective} or \key{subinjective} if it restricts to a projective or injective representation of its support, respectively.
\end{definition}

To utilize Theorem \ref{thm:main} in the study of tensor products of these representations, we must first present them as linearizations of some quivers over $Q$.


\begin{definition}
A \key{structure quiver} for $V \in \rep(Q)$ is an object $X \in \quivq_0$ such that $L(X) \simeq V$.  A structure quiver $X=(Q', f')$ for $V$ is said to be \key{minimal} if any other structure quiver $Y=(Q'',f'')$ for $V$ has at least as many arrows as $Q'$.
\end{definition}

In the language of   \cite{Ringel:1998gf}, a structure quiver is a ``coefficient quiver'' in some basis.
By dimension reasons, any two structure quivers for a given $V$ have the same number of vertices over each vertex of $Q$.  But the following example shows a basic way that a structure quiver can fail to be minimal.
%
%

\begin{example}\label{ex:nonmin}
Take for our base quiver
\[
Q=\vcenter{\hbox{
\begin{tikzpicture}[point/.style={shape=circle,fill=black,scale=.5pt,outer sep=3pt},>=latex]
   \node[point,label={below:$3$}] (3) at (0,0) {};
   \node[draw, color=white,scale=.6pt,outer sep=2pt] (2) at (2,0) {};
  \node[point,label={below:$2$}] at (2,0) {};
  \node[draw, color=white,scale=.6pt,outer sep=3pt] (1) at (4,0) {};
  \node[point,label={below:$1$}] at (4,0) {};
  
  \path[->]
  	(3.35) edge node[above] {$\za$} (2.145) 
  	(3.-35) edge node[below] {$\zb$} (2.-145)
  	(2) edge node[above] {$\zg$} (1);
   \end{tikzpicture}}}
\]
and consider $P(3)$, the projective representation associated to vertex 3. The ``natural'' structure quiver for $P(3)$ is
\[
Q'=
\vcenter{\hbox{
\begin{tikzpicture}[point/.style={shape=circle,fill=black,scale=.5pt,outer sep=3pt},>=latex]
   \node[point,label={below:$3$}] (3) at (0,0) {};
   \node[draw, color=white,scale=.6pt,outer sep=2pt] (2a) at (2,1) {};
  \node[point,label={below:$2$}] at (2,1) {};
  \node[draw, color=white,scale=.6pt,outer sep=3pt] (1a) at (4,1) {};
  \node[point,label={below:$1$}] at (4,1) {};
   \node[draw, color=white,scale=.6pt,outer sep=2pt] (2b) at (2,-1) {};
  \node[point,label={below:$2$}] at (2,-1) {};
  \node[draw, color=white,scale=.6pt,outer sep=3pt] (1b) at (4,-1) {};
  \node[point,label={below:$1$}] at (4,-1) {};
  
  \path[->]
  	(3.35) edge node[above] {$\za$} (2a.180) 
  	(3.-35) edge node[below] {$\zb$} (2b.-180)
  	(2a) edge node[above] {$\zg$} (1a)
  	(2b) edge node[above] {$\zg$} (1b);
   \end{tikzpicture}}}
\]
(where we mark the vertices and edges according to what they lie over in $Q$).  But one can quickly see that the linearization of
\[
Q''=
\vcenter{\hbox{
\begin{tikzpicture}[point/.style={shape=circle,fill=black,scale=.5pt,outer sep=3pt},>=latex]
   \node[point,label={below:$3$}] (3) at (0,0) {};
   \node[draw, color=white,scale=.6pt,outer sep=2pt] (2a) at (2,1) {};
  \node[point,label={below:$2$}] at (2,1) {};
  \node[draw, color=white,scale=.6pt,outer sep=3pt] (1a) at (4,1) {};
  \node[point,label={below:$1$}] at (4,1) {};
   \node[draw, color=white,scale=.6pt,outer sep=2pt] (2b) at (2,-1) {};
  \node[point,label={below:$2$}] at (2,-1) {};
  \node[draw, color=white,scale=.6pt,outer sep=3pt] (1b) at (4,-1) {};
  \node[point,label={below:$1$}] at (4,-1) {};
  
  \path[->]
  	(3.35) edge node[above] {$\za$} (2a.180) 
  	(3.-35) edge node[below] {$\zb$} (2b.-180)
  	(2a) edge node[above] {$\zg$} (1a)
  	(2b) edge node[above] {$\zg$} (1b)
  	(2a) edge node[above] {$\zg $} (1b);
   \end{tikzpicture}}}
\]
will also give a representation isomorphic to $P(3)$, and that we have an embedding $Q' \subseteq Q''$ as quivers over $Q$.
\end{example}

\subsection{Definition of the PIE category}
We now present the natural structure quivers for subprojective, subinjective, and idempotent representations of an acyclic quiver.  Then we justify calling them ``natural'' by showing that these are the unique minimal structure quivers for these representations.
For each subquiver $T \subseteq Q$, consider the following quivers over $Q$.
\begin{itemize}
\item When $T$ has a unique source $t$, we define the vertex set of the quiver $P_T$ as the set of all paths in $T$ starting at $t$; the structure map as a quiver over $Q$ sends such a path to its endpoint in $Q$.
We put an arrow from the vertex associated to a path $p$ to the one for a path $q$ in $P_T$ exactly when $q$ is obtained by concatenating a single arrow $\za$ onto the end of $p$; in this case, that arrow in $P_T$ is sent to the arrow $\za \in Q_1$ by the structure map.  So in Example \ref{ex:nonmin}, we have $Q'=P_Q$.
In \cite[\S 2]{Enochs:2004ve}, this is called the component of the ``(left) path space'' of $Q$ associated to $t$.
\item When $T$ has a unique sink, $I_T$ is defined dually; its vertex set is the collection of all paths within $T$ that end at the sink.
\item For any subquiver $T\subseteq Q$, the inclusion of $T$ into $Q$ will be denoted by $E_T$ when being considered as a quiver over $Q$.
\end{itemize}

It will always be implicit that $P_T$ or $I_T$ is only defined when $T$ has a unique source or sink, respectively. 

\begin{remark}\label{rem:coincidences}
There are coincidences among the $P$-, $I$-, and $E$-type objects, which we record for reference later.  Two distinct paths are said to be \key{parallel} if they start at the same vertex and end at the same vertex.  Then $E_T = P_T$ if and only if $T$ has a unique source and no parallel paths, while $E_T = I_T$ if and only if $T$ has a unique sink and no parallel paths.  We have $I_T = P_T$ exactly when $T$ is just a single path, in which case we get that these both equal $E_T$ as well.
\end{remark}

\begin{definition} Let
{PIE} be the full subcategory of the category of quivers over $Q$ whose objects are all the $P_T, I_T,$ and $E_T$ as $T$ varies over all subquivers of $Q$.
\end{definition}

\begin{example}\label{ex:pieobjects}
With $Q$ as in Example \ref{ex:nonmin}, the following list gives the distinct objects of PIE.
\begin{itemize}
\item The ten connected subquivers of $Q$.  
\item The $P$-type objects which are not subquivers:
\[
\begin{tikzpicture}[point/.style={shape=circle,fill=black,scale=.5pt,outer sep=3pt},>=latex]
 \node[draw, color=white,label={left:$P_{\za \zb}=$}]  at (0,0) {};
   \node[point,label={below:$3$}] (3) at (0,0) {};
   \node[draw, color=white,scale=.6pt,outer sep=2pt] (2a) at (2,1) {};
  \node[point,label={below:$2$}] at (2,1) {};
   \node[draw, color=white,scale=.6pt,outer sep=2pt] (2b) at (2,-1) {};
  \node[point,label={below:$2$}] at (2,-1) {};
  
  \path[->]
  	(3.35) edge node[above] {$\za$} (2a.180) 
  	(3.-35) edge node[below] {$\zb$} (2b.-180);
   \end{tikzpicture}
\qquad%
\qquad%
\begin{tikzpicture}[point/.style={shape=circle,fill=black,scale=.5pt,outer sep=3pt},>=latex]
    \node[draw, color=white,label={left:$P_{Q}=$}]at (0,0) {};
    \node[point,label={below:$3$}] (3) at (0,0) {};
   \node[draw, color=white,scale=.6pt,outer sep=2pt] (2a) at (2,1) {};
  \node[point,label={below:$2$}] at (2,1) {};
  \node[draw, color=white,scale=.6pt,outer sep=3pt] (1a) at (4,1) {};
  \node[point,label={below:$1$}] at (4,1) {};
   \node[draw, color=white,scale=.6pt,outer sep=2pt] (2b) at (2,-1) {};
  \node[point,label={below:$2$}] at (2,-1) {};
  \node[draw, color=white,scale=.6pt,outer sep=3pt] (1b) at (4,-1) {};
  \node[point,label={below:$1$}] at (4,-1) {};
  
  \path[->]
  	(3.35) edge node[above] {$\za$} (2a.180) 
  	(3.-35) edge node[below] {$\zb$} (2b.-180)
  	(2a) edge node[above] {$\zg$} (1a)
  	(2b) edge node[above] {$\zg$} (1b);
   \end{tikzpicture}
\]
\item The $I$-type objects not included above: 
\[
\begin{tikzpicture}[point/.style={shape=circle,fill=black,scale=.5pt,outer sep=3pt},>=latex]
    \node[draw, color=white,label={left:$I_{\za \zb}= $}]at (-0.5,0) {};
   \node[point,label={below:$3$}] (3a) at (0,1) {};
   \node[point,label={below:$3$}] (3b) at (0,-1) {};
   \node[draw, color=white,scale=.6pt,outer sep=2pt] (2) at (2,0) {};
  \node[point,label={below:$2$}] at (2,0) {};
  
  \path[->]
  	(3a.-5) edge node[above] {$\za$} (2.150) 
  	(3b.5) edge node[below] {$\zb$} (2.-150);
   \end{tikzpicture}
\qquad%
\qquad%
\begin{tikzpicture}[point/.style={shape=circle,fill=black,scale=.5pt,outer sep=3pt},>=latex]
    \node[draw, color=white,label={left:$I_{Q}= $}]at (-0.5,0) {};
   \node[point,label={below:$3$}] (3a) at (0,1) {};
   \node[point,label={below:$3$}] (3b) at (0,-1) {};
   \node[draw, color=white,scale=.6pt,outer sep=2pt] (2) at (2,0) {};
  \node[point,label={below:$2$}] at (2,0) {};
  \node[draw, color=white,scale=.6pt,outer sep=3pt] (1) at (4,0) {};
  \node[point,label={below:$1$}] at (4,0) {};
  
  \path[->]
  	(3a.-5) edge node[above] {$\za$} (2.150) 
  	(3b.5) edge node[below] {$\zb$} (2.-150)
	(2) edge node[above] {$\zg$} (1);
   \end{tikzpicture}
\]

\end{itemize}
\end{example}

\begin{lemma}
The objects of PIE are the unique minimal structure quivers for the indecomposable subprojective, subinjective, and idempotent representations.
\end{lemma}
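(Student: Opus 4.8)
The plan is to pin down which representations the PIE objects linearize to, to give a lower bound for the number of arrows in any structure quiver of a representation $V$ depending only on $V$, and to show that the PIE objects are exactly the quivers meeting that bound.

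\emph{The correspondence.} First, $L(E_T)=\mathbb{1}_T$ is the idempotent representation supported on $T$ (indecomposable precisely when $T$ is connected); $L(P_T)$ is the indecomposable projective of the quiver $T$ at its unique source pushed forward to $Q$, which is subprojective with support $T$; and dually $L(I_T)$ is indecomposable subinjective with support $T$. Conversely, if $V$ is indecomposable and subprojective, then $V$ restricted to $S:=\supp V$ is projective, hence a direct sum of indecomposable projectives of $S$, and indecomposability leaves a single summand $P_S(t)$. In the support of an indecomposable projective at $t$ the vertex $t$ is necessarily the unique source: no arrow into $t$ survives, since $S$ is acyclic, while the last arrow of any path out of $t$ does survive. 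So $S$ has $t$ as unique source and $V=L(P_S)$ with $P_S\in\mathrm{PIE}$; the idempotent case is identical with $E$ replacing $P$, and the subinjective case follows by passing to $Q^{\mathrm{op}}$. Together with uniqueness below, this yields the claimed bijection.

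\emph{The lower bound.} For a structure quiver $Y=(Q'',f'')$ of $V$ and an arrow $\alpha$ of $Q$, the matrix of $L(Y)_\alpha$ in the standard bases has $(w,u)$-entry equal to the number of arrows of $Q''$ over $\alpha$ from $u$ to $w$; hence the number of arrows of $Q''$ over $\alpha$ is at least the number of nonzero entries of this matrix, which (being a sum of that many rank-one matrices) is at least $\operatorname{rank}L(Y)_\alpha=\operatorname{rank}V_\alpha$. Summing over $\alpha$ gives $\#Q''_1\ge\sum_{\alpha\in Q_1}\operatorname{rank}V_\alpha$. A short count shows the PIE objects attain equality: for $P_T$, concatenating a path with an arrow is injective, so $\operatorname{rank}L(P_T)_\gamma$ is the number of paths in $T$ from $t$ to $s(\gamma)$, which is also the number of arrows of $P_T$ over $\gamma$; the $I_T$ case is dual, and $E_T$ is trivial. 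Hence the PIE objects are minimal structure quivers.

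\emph{Uniqueness.} Let $Y$ be any minimal structure quiver of $V=L(X)$ with $X\in\mathrm{PIE}$. As $Y$ has no vertices or arrows over vertices or arrows outside $\supp V$, we may reduce to $Q=\supp V=T$ and $X\in\{E_T,P_T,I_T\}$ of full support. Minimality forces every inequality above to be an equality, so each $L(Y)_\alpha$ is a $0/1$ matrix (whence $Y$ is a wrapping) whose number of ones equals its rank; since $\operatorname{rank}\le\#(\text{nonzero rows})\le\#(\text{ones})$ and likewise for columns, $L(Y)_\alpha$ is a partial permutation matrix. For $X=E_T$ this immediately gives $Y\cong E_T$. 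For $X=P_T$: since $V_\gamma$ is injective, every column of $L(Y)_\gamma$ carries a $1$, so we may define $\Phi$ from the paths in $T$ starting at $t$ to the vertices of $Y$ by $\Phi(e_t)=$ the unique vertex over $t$ and $\Phi(p\gamma)=L(Y)_\gamma(\Phi(p))$, and then $L(Y)_p$ sends the standard basis vector at $\Phi(e_t)$ to the one at $\Phi(p)$ for every path $p$. The trivial path generates the indecomposable projective, and any isomorphism $L(Y)\cong P_T(t)$ acts on the one-dimensional fiber over $t$ as a nonzero scalar, so the basis vector at $\Phi(e_t)$ generates $L(Y)$; hence $\{e_{\Phi(p)}\}$ spans every fiber and, being a subset of the standard basis, is the whole basis. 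Thus $\Phi$ is a bijection on vertices carrying the arrow $(p,\gamma)$ of $P_T$ to the unique arrow of $Y$ over $\gamma$ from $\Phi(p)$ to $L(Y)_\gamma(\Phi(p))$, i.e.\ an isomorphism of quivers over $Q$; the $I_T$ case is dual, using that the indecomposable injective is cogenerated by its one-dimensional socle. The crux of the whole argument is this last step --- promoting a representation isomorphism to an isomorphism of quivers over $Q$ --- which the generation (resp.\ cogeneration) of the indecomposable (in)projective by its top (resp.\ socle) makes possible; everything else is bookkeeping with ranks and fibers.
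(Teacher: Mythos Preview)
Your proof is correct, and for the uniqueness part it takes a genuinely different route from the paper's. Both arguments establish minimality the same way---by bounding the number of arrows over each $\alpha$ below by $\operatorname{rank}V_\alpha$---but they diverge on uniqueness. The paper argues by induction on the number of arrows of $T$: it deletes an arrow $\alpha$ ending at a sink, applies the induction hypothesis to identify the portion of the minimal structure quiver over $T\setminus\alpha$ with $P_{\tilde T}$, and then reconstructs the arrows over $\alpha$. Your argument instead exploits the equality case of the rank bound directly: equality forces every $L(Y)_\alpha$ to be a partial permutation matrix, so the action of paths on the unique basis vector over the source $t$ traces out a map $\Phi$ from the vertex set of $P_T$ into the standard basis of $L(Y)$, and cyclicity of the projective promotes $\Phi$ to the desired isomorphism of quivers over $Q$.

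Your approach is arguably more conceptual: it avoids induction, makes explicit the partial-permutation structure of any minimal structure quiver, and isolates precisely the representation-theoretic input (generation by the top, cogeneration by the socle) that allows one to upgrade $L(Y)\simeq L(P_T)$ to $Y\cong P_T$ over $Q$. The paper's induction is perhaps more pedestrian but has the virtue of being entirely combinatorial once the rank bound is in hand. One small point worth making explicit in your write-up: the bijectivity of $\Phi$ on arrows follows from injectivity plus the fact that $Y$, being minimal, has the same total number of arrows as $P_T$; you use this implicitly but it deserves a sentence.
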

\begin{proof}
It is easy to see that $L(P_T)$ is subprojective, $L(I_T)$ is subinjective, and $L(E_T)$ is idempotent, and that each of these is indecomposable; this is just the standard construction of projectives and injectives which can be found, for example, in \cite[Lemma~III.2.4]{assemetal}.  Thus, we need to show that they are minimal and uniquely so.

If $X=(Q',f')$ is such that $L(X) = L(E_T)$ is an idempotent representation, there is exactly one vertex of $Q'$ over each vertex of $T$. Consequently, all arrows of $Q'$ over a given $\za \in T_1$ must be parallel; taking precisely one arrow over each $\za \in T_1$ is then the unique minimal choice, which is exactly the definition of $E_T$.

Now the $P$-type and $I$-type cases are dual (each follows from the other by working with quivers over $Q^{\rm op}$), so it is enough to prove the statement for the $P$-type case.  Suppose $X = (Q',f')$ is such that $L(X) = L(P_T)$, and fix an arrow $\za \in T_1$.  Then the map $L(X)_\za$ is injective with rank equal to the number of paths in $T$ from the source of $T$ to $s(\za)$, by the description of projectives.  Since a rank $r$ map cannot be the sum of strictly less than $r$ rank one maps, the pushforward construction (\ref{eq:pfmaps}) requires that $Q'$ must have at least this many arrows over $\za$.  So $P_T$ is minimal since it has precisely this many arrows.  

To see that it is unique, we use induction on the number of arrows in $T$.  When $T$ has no arrows the uniqueness is clear.  Now if $T$ has arrows, let $\za$ be an arrow ending at some sink of $T$, and denote by $\tilde{T}$ the connected component of $T\setminus \za$ containing the source of $T$ (i.e., remove $\za$, and if that isolates the vertex $t(\za)$, discard that vertex).  Then working with representations over $\tilde{T}$ (which has a unique source), we define $\tilde{Q}' = f'^{-1}(\tilde{T})$ and see that  the linearization of $\tilde{X} = (\tilde{Q}', f')$ is $L(P_{\tilde{T}})$.

Let $\{v'_1, \dotsc, v'_n \}$ be the vertices of $Q'$ lying over $s(\za)$.  Each $v'_i$ must have at least one outgoing arrow $\za'_i$ in $Q'$ lying over $\za$, because otherwise the vector corresponding to $v'_i$ in $L(X)$ would be in the kernel of the linear map over $\za$, which is not possible since the maps in a projective representation are injective.  By dimension count at the vertex $t(\za)$, each $\za'_i$ ends at a new vertex $w'_i$ of $Q'$ which is not in $\tilde{Q}'$.  By the assumption that $X$ is a minimal structure quiver for $L(P_T)$, we know that $Q'$ has the same number of arrows as $P_T$.  If some $v'_i$ had more than one outgoing arrow over $\za$, that would leave $\tilde{Q}'$ with fewer arrows than $P_{\tilde{T}}$, contradicting the fact that $P_{\tilde{T}}$ is minimal.  So there are exactly $n$ arrows over $\za$ in $Q'$, and $\tilde{Q}'$ has the same number of arrows as $P_{\tilde{T}}$.  By induction, we get that $\tilde{X} = P_{\tilde{T}}$, then the remaining arrows over $\za$ are configured exactly so that $X = P_T$.
\end{proof}

It is worth remarking that we have proven something slightly stronger, namely, that an object of the PIE category actually embeds in any quiver over $Q$ giving the same linearization.

\subsection{Morphisms in PIE} In order to see that the Theorem \ref{thm:main} can be applied to PIE, and eventually do some computations in its M\"obius ring, we need to know the cardinalities of Hom sets.  We first record some simple facts, continuing to use the notation $[X,Y] $ for the cardinality of $\Hom_\quivq (X,Y)$.

\begin{lemma}\label{lem:homtoe}
\begin{enumerate}[(a)] Let $X, Y$ be quivers over $Q$.
\item $[X,Y] = 0$ unless $\supp X \subseteq \supp Y$
\item For $T\subseteq Q$ we have
\begin{equation}\label{eq:homtoe}
[X,E_T] = 
\begin{cases}
1 & \textup{if}\ \supp(X) \subseteq T,\\
0 &	\textup{otherwise.} \\
\end{cases}
\end{equation}
\end{enumerate}
\end{lemma}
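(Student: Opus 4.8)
The plan is to unwind the definition of a morphism in $\quivq$ together with the definition $\supp X = f'(Q')$ for an object $X = (Q', f')$ of $\quivq$; once this is done, both statements follow quickly.

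For part (a), write $X = (Q', f')$ and $Y = (Q'', f'')$. A morphism $g\colon X \to Y$ is, by definition, a morphism of quivers $g\colon Q' \to Q''$ with $f'' \circ g = f'$ on vertices and on arrows. I would simply observe that this forces $f'(Q') \subseteq f''(Q'')$: every vertex (resp.\ arrow) $\sigma$ of $Q'$ satisfies $f'(\sigma) = f''(g(\sigma))$, which lies in the image of $f''$. Hence $\supp X \subseteq \supp Y$, and taking the contrapositive proves (a).

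For part (b), I would use that $E_T = (T, \iota_T)$ where $\iota_T\colon T \hookrightarrow Q$ is the inclusion, which is injective on vertices and arrows, hence a monomorphism in the category of quivers. A morphism $g\colon X \to E_T$ is a quiver morphism $Q' \to T$ with $\iota_T \circ g = f'$. Since $\iota_T$ is a monomorphism, such a $g$ is unique if it exists, and it exists precisely when $f'$ factors through $T$, i.e.\ when $f'(Q') = \supp X \subseteq T$; checking that the resulting $g$ is genuinely a morphism of quivers over $Q$ (it respects $s,t$ because $f'$ does, and commutes with the structure maps by construction) is routine. This gives \eqref{eq:homtoe}, and it is consistent with (a) since $\supp E_T = T$.

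I do not anticipate a genuine obstacle here: the lemma is essentially a bookkeeping exercise in the definitions. The one point worth flagging is the distinction between $\supp$ of an object of $\quivq$ (the image subquiver $f'(Q')$) and $\supp$ of a representation of $Q$; but the argument never leaves the level of quivers, so no identification between the two is needed.
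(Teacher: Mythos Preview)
Your proposal is correct and follows essentially the same approach as the paper: both argue part (a) directly from the commutativity constraint $f'' \circ g = f'$ in the definition of a morphism in $\quivq$, and both deduce part (b) by noting that since the structure map of $E_T$ is the inclusion $T \hookrightarrow Q$, any factorization of $f'$ through it is unique and exists exactly when $f'(Q') \subseteq T$. Your write-up is slightly more explicit (invoking the monomorphism property of $\iota_T$), but the underlying argument is identical.
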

\begin{proof}
We can see (a) immediately from the diagram (\ref{eq:quivhom}) in the definition of morphisms in $\quivq$.   Then specializing this diagram to the situation of (b), we see that the dotted line in
\[
\begin{tikzpicture}[>=latex,description/.style={fill=white,inner sep=2pt}] 
\matrix (m) [matrix of math nodes, row sep=3em, 
column sep=2.5em, text height=1.5ex, text depth=0.25ex] 
{  Q' &  & E_T  \\ 
 & Q &  \\ }; 
\path[->,font=\scriptsize] 
(m-1-1) edge node[auto,swap] {$ f' $} (m-2-2);
\path[right hook->,font=\scriptsize] 
(m-1-3) edge node[auto] {$\subseteq$} (m-2-2);
\path[->,dashed,font=\scriptsize]
(m-1-1) edge (m-1-3);
$\circlearrowleft$;
\end{tikzpicture}
\]
can only be filled in when $\supp(X) = f'(Q') \subseteq T$, and only by the morphism $f'$.
\end{proof}

Describing maps to $P$-type objects is slightly more complicated, but we can get enough of a description to count morphism sets in PIE.

\begin{prop}\label{prop:homtop}
Let $T\subseteq Q$ be a subquiver, and $X=(Q',f')$ a quiver over $Q$ with $\supp X \subseteq T$.
\begin{enumerate}[(a)]
\item Given a map of vertex sets $g_0 \colon Q'_0 \to (P_T)_0$ that respects the structure maps to $Q$, there is a unique map of arrow sets $g_1 \colon Q'_1 \to (P_T)_1$ which respects the structure maps to $Q$ and also the start vertex function $s$.
\item The maps in (a) give a morphism $g = (g_0, g_1)\colon Q'\to P_T$ in $\quivq$ if and only if, when regarding the vertices of $P_T$ as paths in $T$, the equation
\begin{equation}\label{eq:homtopcriterion}
g_0 (t(\za')) = g_0 (s(\za')) f'(\za') 
\end{equation}
holds for each arrow $\za' \in Q'_1$. (The operation on the right hand side is concatenation.)
\end{enumerate}
\end{prop}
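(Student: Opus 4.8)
The plan is to prove both parts by directly unwinding the definition of $P_T$ (whose vertices are the paths in $T$ from its unique source $t$, with an arrow from $p$ to $q$ lying over $\beta \in T_1$ precisely when $q = p\beta$), the definition of a morphism in $\quivq$ (a pair $(g_0,g_1)$ commuting with $s$, $t$, and the structure maps), and observing that once $g_0$ is fixed no freedom is left in $g_1$.

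For part (a), I would begin from the fact that, since $g_0$ respects the structure maps to $Q$, for each $v' \in Q'_0$ the path $g_0(v')$ is a path in $T$ starting at $t$ whose endpoint is $f'(v')$. Now fix an arrow $\za' \colon v' \to w'$ of $Q'$. Because $\supp X \subseteq T$, the arrow $f'(\za')$ lies in $T_1$, and its start vertex is $f'(v')$, which is exactly the endpoint of the path $g_0(v')$; hence the concatenation $g_0(v')\,f'(\za')$ is a legitimate path in $T$ from $t$, i.e.\ a vertex of $P_T$. By the construction of $P_T$ there is then exactly one arrow of $P_T$ whose start vertex is $g_0(v')$ and which lies over $f'(\za')$, namely the arrow $g_0(v') \to g_0(v')\,f'(\za')$. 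The requirements that $g_1$ respect $s$ and the structure maps therefore force $g_1(\za')$ to be this arrow, proving existence and uniqueness at once. I would also record here the by-product $t(g_1(\za')) = g_0(v')\,f'(\za') = g_0(s(\za'))\,f'(\za')$, which makes part (b) fall out immediately.

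For part (b), I would note that the pair $(g_0,g_1)$ constructed in (a) already commutes with $s$ and with the structure maps, so it is a morphism in $\quivq$ if and only if it also commutes with $t$, that is, $t(g_1(\za')) = g_0(t(\za'))$ for every $\za' \in Q'_1$. Substituting the identity recorded above for the left-hand side converts this condition into exactly equation (\ref{eq:homtopcriterion}), which completes the argument.

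I do not expect a genuine obstacle here; the one step that needs attention is the existence half of part (a), where one must observe that the two hypotheses --- $\supp X \subseteq T$ and ``$g_0$ respects the structure maps'' --- are precisely what guarantees that $g_0(s(\za'))\,f'(\za')$ is a path in $T$ (hence a vertex of $P_T$), so that the forced value of $g_1(\za')$ actually exists. Everything else is routine matching of the morphism axioms in $\quivq$ against path concatenation in $P_T$.
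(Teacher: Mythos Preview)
Your proposal is correct and follows essentially the same approach as the paper's own proof: both arguments pin down $g_1(\za')$ as the unique arrow of $P_T$ out of $g_0(s(\za'))$ lying over $f'(\za')$, use the hypothesis $\supp X \subseteq T$ to guarantee this arrow exists, and then reduce part (b) to the single remaining condition that $g$ respect the terminal-vertex function $t$. Your write-up is slightly more explicit about why the concatenation $g_0(s(\za'))f'(\za')$ is a legitimate vertex of $P_T$, but there is no substantive difference in strategy.
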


\begin{proof}
Given a map between vertex sets as in the hypotheses of (a), we explicitly describe the resulting map of arrows.  For each $\za' \in Q'_1$, the arrow $g_1 (\za')$ in $P_T$ must start at $g_0 (s(\za'))$ to respect the $s$ function.  To respect the structure maps to $Q$, this arrow must be labeled with $f'(\za')$.  But in $P_T$,  each vertex has at most one outgoing arrow labeled by a given arrow in $Q$, and the assumption that $\supp X \subseteq T$ guarantees that there is such an arrow for this vertex.
So we can define $g_1(\za')$ as the unique arrow of $P_T$ lying over $f'(\za')$ in $Q$ and satisfying $s(g_1(\za')) = g_0 (s(\za'))$.  This shows (a).

Now suppose that the resulting map is a morphism in $\quivq$.  Then it must respect both the start and terminal vertex function $s,t$, and so an arrow $s(\za') \xrightarrow{\za'} t(\za')$ is sent to
\[
g_0(s(\za')) \xrightarrow{g_1(\za')} g_0 (t(\za'))
\]
in $P_T$, with $g_1(\za')$ lying over $f'(\za')$.  But the construction of $P_T$ is such that this is equivalent to equation (\ref{eq:homtopcriterion}).  Conversely, we need to see that the function $t$ is respected when this equation holds for all arrows.  Since at least $s(g_1(\za')) = g_0 (s(\za'))$, any arrow $s(\za') \xrightarrow{\za'} t(\za')$ is sent to an arrow
\[
g_0(s(\za')) \xrightarrow{g_1(\za')} t(g_1 (\za')) 
\]
in $P_T$.  But then $g_1(\za')$ lying over $f'(\za')$ gives the equation of paths
\[
t(g_1 (\za')) = g_0 (s(\za')) f'(\za') 
\]
by the construction of $P_T$ again, which is exactly equal to $g_0 (t(\za'))$ by assumption.  So $t$ is respected by these maps of vertices and arrows, and thus $g$ is a morphism in $\quivq$.
\end{proof}


\begin{corollary}\label{cor:homptop}
If $Q'$ has a unique source $i'$, then any morphism $g \colon Q' \to P_T$ in $\quivq$ is uniquely determined by $g(i')$.  Consequently, $ [P_S, P_T] $ is equal to 
the number of paths in $T$ from the source of $T$ to the source of $S$
if $S \subseteq T$, and 0 otherwise.
\end{corollary}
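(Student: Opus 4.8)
The plan is to obtain both parts of the corollary from Proposition~\ref{prop:homtop}. For the uniqueness clause I would take two morphisms $g,h\colon Q'\to P_T$ in $\quivq$ with $g(i')=h(i')$ and show $g=h$. By Proposition~\ref{prop:homtop}(a), each of $g$ and $h$ is determined by its effect on vertex sets, so it is enough to prove $g_0=h_0$. Here I would use that in a finite acyclic quiver with a unique source, every vertex is reachable from that source along a directed path --- otherwise the non-reachable vertices would span a nonempty acyclic subquiver, a source of which would be a second source of the ambient quiver. (If $Q'$ contains a directed cycle, then since the vertices of $P_T$ are paths and an arrow of $P_T$ strictly increases length, iterating the criterion \eqref{eq:homtopcriterion} around the cycle would force a path to equal a strictly longer path; so there are no morphisms $Q'\to P_T$ at all and the uniqueness clause is vacuous. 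Thus it suffices to treat acyclic $Q'$, in particular $Q'=P_S$.) Granting reachability, I would induct on the length of a directed path from $i'$ to a vertex $v'$: if $v'=t(\za')$ and $g_0(s(\za'))=h_0(s(\za'))$, then \eqref{eq:homtopcriterion} forces $g_0(v')=g_0(s(\za'))f'(\za')=h_0(s(\za'))f'(\za')=h_0(v')$.

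For the enumeration I would first note $\supp P_S=S$ and $\supp P_T=T$, so $[P_S,P_T]=0$ whenever $S\not\subseteq T$ by Lemma~\ref{lem:homtoe}(a). Assuming $S\subseteq T$, let $s$ be the unique source of $S$, so the source $i'$ of $P_S$ is the trivial path at $s$ and $\supp P_S=S\subseteq T$, which puts us in the setting of Proposition~\ref{prop:homtop} with $X=P_S$. Any morphism $g\colon P_S\to P_T$ respects the structure maps to $Q$, and the structure map of $P_S$ sends $i'$ to $s$; hence $g(i')$ is a vertex of $P_T$ over $s$, that is, a path in $T$ from the source of $T$ to $s$, and by the first part $g$ is determined by it. To see that every such path $p$ occurs, I would set $g_0(q):=pq$ for each vertex $q$ of $P_S$ (viewing $q$ as a path in $S\subseteq T$ issuing from $s$), so that $pq$ is a path in $T$ from the source of $T$ with the same endpoint as $q$, hence a legitimate vertex of $P_T$ respecting the structure maps; I would then extend to arrows by Proposition~\ref{prop:homtop}(a) and check that \eqref{eq:homtopcriterion} for an arrow $q\to q\za$ of $P_S$ reduces to the associativity identity $p(q\za)=(pq)\za$, so that Proposition~\ref{prop:homtop}(b) makes $g$ a morphism with $g(i')=p$. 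Therefore $g\mapsto g(i')$ is a bijection onto the set of paths in $T$ from the source of $T$ to $s$, which is the asserted count.

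The point needing the most care --- and the only real obstacle --- is well-definedness of the vertex map: feeding \eqref{eq:homtopcriterion} forward from $i'$ along directed paths in $Q'$ could conceivably be inconsistent if $Q'$ had two directed paths between the same pair of vertices. For $Q'=P_S$ this never happens, since a vertex of $P_S$ is reached from the trivial path by appending its arrows one at a time in a unique way; that is exactly why exhibiting the explicit candidate $g_0(q)=pq$ bypasses the issue, leaving only the routine verification above.
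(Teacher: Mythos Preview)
Your proposal is correct and follows essentially the same route as the paper: reduce to vertex maps via Proposition~\ref{prop:homtop}(a), propagate from the source using \eqref{eq:homtopcriterion}, invoke Lemma~\ref{lem:homtoe}(a) for $S\not\subseteq T$, and exhibit the concatenation map $q\mapsto pq$ for the construction. Your write-up is more careful than the paper's in one respect: you explicitly dispose of the case where $Q'$ contains a directed cycle and justify reachability from the unique source, whereas the paper tacitly assumes every vertex lies on a path from $i'$.
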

\begin{proof}
Part (a) of Proposition \ref{prop:homtop} tells us that the images of arrows under $g$ are determined by the images of the vertices.  Repeated use of equation (\ref{eq:homtopcriterion}) shows that $g(i')$ determines $g(j')$ for any vertex $j'$ lying on a path starting at $i'$. Since $i'$ is the unique source, this determines $g$ completely.

To show the second statement of the corollary, observe first that if $S\nsubseteq T$ then Lemma \ref{lem:homtoe} (a) implies that $[P_S,P_T]=0$. 
Suppose now that $S\subseteq T$.
Compatibility with structure maps requires that any morphism in $\quivq$ sends the source of $P_S$ to a vertex of $P_T$ associated to a path $q$ in $T$ ending at the source of $S$.  Any such choice extends to a morphism $P_S \to P_T$ in the obvious way, by sending a path in $S$ to its concatenation with $q$, which is a path in $T$. Similarly, there is one obvious way to define the map on arrows of $P_S$.  Now the previous paragraph implies that this extension to the rest of $P_S$ is unique.
\end{proof}

\begin{corollary}\label{cor:homietop}
If there   exists a morphism $g\colon Q' \to P_T$ in $\quivq$, then any two arrows with the same terminal vertex in $Q'$ must lie over the same arrow in $Q$.  That is, for $\za',\zb' \in Q'_1$ with $t(\za') = t(\zb')$, we have $f'(\za')=f'(\zb')$.  Consequently, we get that $[E_S , P_T]=0$ unless $E_S = P_S$, and $[I_S, P_T]=0$ unless $I_S = P_S$.
\end{corollary}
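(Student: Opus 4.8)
The plan is to reduce everything to the concatenation criterion in Proposition~\ref{prop:homtop}(b). First I would prove the displayed assertion. Suppose $g=(g_0,g_1)\colon Q'\to P_T$ is a morphism in $\quivq$, where $Q'$ has structure map $f'$. Since $\supp(Q',f')\subseteq T$ would be needed to apply Proposition~\ref{prop:homtop}, I first note $\supp P_T\subseteq T$ (the structure map of $P_T$ takes values among the vertices and arrows of $T$), so Lemma~\ref{lem:homtoe}(a) forces $\supp(Q',f')\subseteq T$. Moreover any morphism into $P_T$ respects the structure maps to $Q$ and the start-vertex function $s$, so $g_1$ must be the canonical arrow map of part~(a), and hence equation~(\ref{eq:homtopcriterion}) holds for every arrow of $Q'$. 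Now take $\za',\zb'\in Q'_1$ with $t(\za')=t(\zb')$. Regarding the vertices of $P_T$ as paths in $T$, (\ref{eq:homtopcriterion}) yields the equality of paths
\[
g_0(s(\za'))\,f'(\za') \;=\; g_0(t(\za')) \;=\; g_0(t(\zb')) \;=\; g_0(s(\zb'))\,f'(\zb') ,
\]
and comparing the last arrows of these two equal paths gives $f'(\za')=f'(\zb')$. This is the only genuine computation, and it is short.

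For the two consequences, in both cases the strategy is to show that the existence of a morphism into $P_T$ forces \emph{every vertex of $S$ to have in-degree at most one} in $S$, and then to feed this into the following elementary fact: ($\star$) \emph{a finite, connected, acyclic quiver in which every vertex has in-degree $\le 1$ is an out-tree} --- it has no parallel arrows, its underlying graph is a tree, it has a unique source, and it has no parallel paths. (For the unique source: a shortest undirected path between two sources must contain a vertex at which the edge orientations reverse, giving that vertex in-degree $\ge 2$. For the absence of parallel paths: if $p\ne q$ both run from $u$ to $v$, then acyclicity makes both have positive length, the in-degree bound at $v$ forces them to share their last arrow, and one strips that arrow off and inducts on length.) In the $E_S$ case the hypothesis of ($\star$) is immediate, since the arrows of $E_S$ lie over themselves, so the assertion already proved says distinct arrows of $S$ cannot share a terminal vertex; then ($\star$) together with Remark~\ref{rem:coincidences} gives $E_S = P_S$ (and $P_S$ is defined precisely because $S$ then has a unique source). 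For $I_S$, ($\star$) makes $S$ an out-tree, and since $I_S$ is defined $S$ also has a unique sink; an out-tree with a unique sink is a single directed path --- a branching vertex would have out-degree $\ge 2$ and hence produce two distinct sinks in disjoint subtrees --- so $I_S = P_S$ by Remark~\ref{rem:coincidences}.

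It remains to supply the in-degree bound in the $I_S$ case, which is the one spot requiring care with the dual construction. Writing $z$ for the unique sink of $S$, the vertices of $I_S$ are the paths of $S$ ending at $z$; every vertex $v$ of $S$ admits such a path (follow outgoing arrows: finiteness and acyclicity bring one to a sink, which must be $z$); and for each arrow $\zb$ of $S$ with $t(\zb)=v$ and each path $p$ from $v$ to $z$, there is an arrow of $I_S$ lying over $\zb$ whose terminal vertex is the vertex for $p$. Hence two distinct incoming arrows $\zb\ne\zb'$ at $v$ would produce two arrows of $I_S$ with the common terminal vertex ``$p$'' but lying over different arrows of $Q$, contradicting the assertion of the first paragraph; so every vertex of $S$ has in-degree $\le 1$, as needed. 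I expect the main obstacle to be purely organizational: getting the directions and labels of the arrows of $I_S$ right from the dual definition, and applying the graph-theoretic content of ($\star$) cleanly --- nothing here is deep.
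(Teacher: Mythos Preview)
Your proof is correct. The first paragraph is essentially identical to the paper's argument: both apply equation~(\ref{eq:homtopcriterion}) to the two arrows and compare the last arrows of the resulting equal paths.

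For the two consequences, your route differs only in level of detail. The paper argues tersely by contrapositive: if $E_S\neq P_S$ then $S$ has either parallel paths or multiple sources, and in either case one finds two arrows of $E_S$ with a common terminal vertex but distinct labels; similarly if $I_S\neq P_S$ one finds such arrows in $I_S$. You instead extract the in-degree-$\le 1$ condition on $S$ and funnel it through the out-tree characterization~($\star$), then invoke Remark~\ref{rem:coincidences}. This is the same idea unpacked: the paper's ``parallel paths or multiple sources'' case split is precisely the failure of~($\star$), and your argument makes explicit why each case produces a vertex of in-degree $\ge 2$. Your treatment of the $I_S$ case is more careful than the paper's single sentence, correctly tracing through the dual construction to locate two arrows of $I_S$ over distinct arrows of~$Q$ sharing a terminal vertex. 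Nothing is missing; the extra care is just exposition, not a different strategy.
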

\begin{proof}
If there exists such a morphism $g$, we apply equation (\ref{eq:homtopcriterion}) to both $\za'$ and $\zb'$ and then use the assumption that $t(\za')=t(\zb')$ to get
\[
g(s(\za'))f'(\za') = g(t(\za')) = g(t(\zb')) = g(s(\zb')) f'(\zb')
\]
as paths in $Q$.  Since a path can  only end with one arrow, it must be that $f'(\za') = f'(\zb')$.
Now if $E_S$ is distinct from $P_S$, then the subquiver $S$ must either have parallel paths or more than one source.  In either case, there will be two arrows in $E_S$ with the same terminal vertex but different labels, preventing any morphism from $E_S$ to $P_T$.  Similarly, if $I_S$ is distinct from $P_S$, then there are distinct arrows in $I_S$ with the same terminal vertex. 
Thus there can be no morphism from $I_S$ to $P_T$.
\end{proof}

The results of this subsection are summarized Table \ref{fig:piehom}, keeping in mind that by Lemma \ref{lem:homtoe}(a) we need $S \subseteq T$ for any corresponding entry to be nonzero, though we don't write this in each entry of the table.

\begin{table}
\begin{center}
  \begin{tabular}{ |c | c | c | c |}
\hline    from\textbackslash  to & $E_T$ & $P_T$ & $I_T$ \\ \hline
\multirow{2}{*}{$E_S$} 	& \multirow{2}{*}{$1$}  	& 0 unless 	& 0 unless \\ 
 					& 	& $E_S = P_S$ &$E_S = I_S$ \\
    \hline
\multirow{2}{*}{$P_S$} 	&  \multirow{2}{*}{$1$}  	& \# paths in $T$ from	& 0 unless \\ 
 					& 	& source $T$ to source $S$ 	&$P_S = I_S=E_S$ \\
    \hline
\multirow{2}{*}{$I_S$} 	&  \multirow{2}{*}{$1$}  	& 0 unless	& \# paths in $T$ from	 \\ 
 					& 	&$I_S = P_S=E_S$ 	& sink $S$ to sink $T$ 	\\
    \hline
  \end{tabular}
\end{center}
\caption{Summary of morphisms in PIE if $S\subseteq T$}
\label{fig:piehom}
\end{table}

\subsection{Fiber products in PIE}

\begin{lemma}\label{lem:fiberwithe}
For $T\subseteq Q$ and $X = (Q',f')$, we have $E_T \times_Q X \simeq f'^{-1}(T)$.  In other words, fiber product with $E_T$ restricts $X$ to $T$.
\end{lemma}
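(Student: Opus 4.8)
The plan is to unwind the concrete description of fiber products in $\quivq$ from Section~\ref{sect:quivq} and exhibit the projection onto the $X$-component as the desired isomorphism. First I would recall that $E_T$ is the quiver over $Q$ given by the inclusion $\iota\colon T\hookrightarrow Q$, so $\iota$ is injective on both vertices and arrows. Applying the formula for the vertex set of a fiber product, a vertex of $E_T\times_Q X$ is a pair $(v,x')\in T_0\times Q'_0$ with $\iota(v)=f'(x')$; since $\iota$ is an inclusion this forces $v=f'(x')$, so the projection to $X$ identifies the vertex set with $\{x'\in Q'_0 : f'(x')\in T_0\}$. Likewise an arrow of $E_T\times_Q X$ is a pair $(\zb,\za')\in T_1\times Q'_1$ with $\iota(\zb)=f'(\za')$, which forces $\zb=f'(\za')$, so the projection to $X$ identifies the arrow set with $\{\za'\in Q'_1 : f'(\za')\in T_1\}$.

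Next I would check compatibility with the source, target, and structure maps. An arrow $(\zb,\za')$ of the fiber product runs from $(s\zb,s\za')$ to $(t\zb,t\za')$, and its structure-map value is the common value $f'(\za')=\zb$; under the projection to $X$ this corresponds to the arrow $\za'\colon s\za'\to t\za'$ of $Q'$ together with the structure value $f'(\za')$, exactly as in $X$. It then remains to observe that the image of this projection is precisely the subquiver $f'^{-1}(T)$ of $Q'$ equipped with the restriction of $f'$ as its structure map. Here one uses that $T\subseteq Q$ is a genuine subquiver (closed under the endpoints of its arrows): whenever $f'(\za')\in T_1$ we automatically have $s(f'(\za'))=f'(s\za')\in T_0$ and $t(f'(\za'))=f'(t\za')\in T_0$, so $f'^{-1}(T)$ is well-defined as a subquiver of $Q'$ and its vertices and arrows are exactly those listed above.

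Combining these observations, the projection $E_T\times_Q X\to X$ factors through an isomorphism $E_T\times_Q X\xrightarrow{\ \sim\ }f'^{-1}(T)$ of quivers over $Q$, which is the claim. I do not expect any real obstacle here beyond routine bookkeeping; the only point meriting a moment's care is the verification that $f'^{-1}(T)$ is an honest subquiver of $Q'$, which is precisely where the hypothesis that $T$ is a subquiver of $Q$ is used.
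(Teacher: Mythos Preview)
Your proof is correct. It proceeds by unwinding the explicit construction of the fiber product given in Section~\ref{sect:quivq} and identifying it with $f'^{-1}(T)$ via the second projection.

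The paper takes a different, slightly more abstract route: rather than computing with the concrete model, it verifies directly that the subquiver $f'^{-1}(T)$ (equipped with the inclusion into $Q'$ and the map $f'$ to $E_T$) satisfies the universal property of the fiber product. Given any $Z$ with compatible maps $g\colon Z\to Q'$ and $h\colon Z\to E_T$, commutativity forces $g(Z)\subseteq f'^{-1}(T)$, so $g$ itself is the required factorization, and it is unique because the inclusion $f'^{-1}(T)\hookrightarrow Q'$ is monic. Your approach has the advantage of being completely explicit and of making visible exactly where the subquiver hypothesis on $T$ enters; the paper's approach is a bit shorter and avoids any element-level bookkeeping. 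Either argument is perfectly adequate here.
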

\begin{proof}
The universal property of the fiber product can be quickly verified: suppose we have a commutative diagram of quiver morphisms given by the solid lines in
\[
\begin{tikzpicture}[>=latex,description/.style={fill=white,inner sep=2pt}] 
\matrix (m) [matrix of math nodes, row sep=3em, 
column sep=2.5em, text height=1.5ex, text depth=0.25ex] 
{Z & & \\ 
   & f'^{-1}(T) &  Q'  \\ 
   &E_T & Q    \\ }; 
\path[dashed,->,font=\scriptsize] 
(m-1-1) edge (m-2-2);
\path[->,font=\scriptsize] 
(m-1-1) edge [bend left=15] node[auto] {$ g $} (m-2-3)
(m-2-3) edge node[auto] {$ f' $} (m-3-3)
(m-2-2) edge node[auto] {$ f' $} (m-3-2)
(m-1-1) edge [bend right=15] node[auto,swap] {$ h$} (m-3-2);
\path[right hook->,font=\scriptsize] 
(m-3-2) edge  (m-3-3)
(m-2-2) edge  (m-2-3);
\end{tikzpicture}
\]
where $Z$ is an arbitrary quiver over $Q$.  We need to see that there is a unique map along the dashed arrow making the diagram commutative everywhere.   The outer square shows that $g(Z) \subseteq f'^{-1}(T)$, so filling in the dashed arrow with $g$ gives a map from $Z$ to $f'^{-1}(T)$ over $Q$ making the two triangles commute.  The upper triangle shows that $g$ is unique.
\end{proof}

We now show that PIE is closed under products with $E$-type objects.   For a vertex $i$ in a quiver $Q$, denote by $\overrightarrow{i}$ the \key{successor closure} of $i$ in $Q$, that is, the full subquiver of $Q$ containing the vertices which can be reached by a path starting at $i$.

\begin{prop}\label{prop:ptimese}
For any $S,T \subseteq Q$, we have that $P_S \times_Q E_T$ is a disjoint union of $P$-type quivers over $Q$.  More specifically, for each source $i$ of $S\cap T$, the quiver $P_{\overrightarrow{i}}$ appears as a component of $P_S \times_Q E_T$ with multiplicity equal to the number of paths from the source of $S$ to $i$ in $S$, where the successor closure is taken inside $S\cap T$.
\end{prop}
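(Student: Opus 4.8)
The plan is to rewrite the fiber product as a restriction and then decompose the resulting explicit quiver into its connected components. First, by Lemma~\ref{lem:fiberwithe}, $P_S \times_Q E_T \simeq f'^{-1}(T)$, where $f'\colon P_S \to Q$ is the structure map sending a path in $S$ to its endpoint. Unwinding the definition of $P_S$, this subquiver --- call it $\Gamma$ --- has as its vertices the paths $p$ in $S$ starting at the source of $S$ whose endpoint lies in $T_0$, and it retains the arrow $p \to p\za$ precisely when $\za$ lies in $S\cap T$ (the condition $\za\in T_1$ already forces the endpoint of $p\za$ into $T_0$). So the statement becomes a purely combinatorial fact about $\Gamma$.

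The key structural observation is that $\Gamma$ is ``forest-like from above'': the only arrow of $\Gamma$ that can enter a vertex $p$ comes from $p'$, where $p=p'\za$ and $\za$ is the final arrow of $p$, and it is present exactly when $\za\in T_1$. Since $Q$ is acyclic, repeatedly passing to the predecessor terminates, so each connected component of $\Gamma$ has a unique \emph{root}: a vertex with no incoming arrow, that is, either the trivial path at the source of $S$, or a path whose final arrow is not in $T_1$. Starting at a root $p_0$ one can only extend by arrows of $S\cap T$, and, again by acyclicity, the component of $p_0$ consists exactly of the vertices $p_0q$ as $q$ runs over paths in $S\cap T$ starting at $i:=t(p_0)$. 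The assignment $p_0q\mapsto q$ is then an isomorphism of quivers over $Q$ from this component onto $P_{\overrightarrow i}$, where the successor closure is taken inside $S\cap T$; here $i$ is automatically the unique source of $\overrightarrow i$ by acyclicity, so $P_{\overrightarrow i}$ is defined. This already establishes that $P_S\times_Q E_T$ is a disjoint union of $P$-type quivers over $Q$.

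Finally, the multiplicity of $P_{\overrightarrow i}$ is the number of roots of $\Gamma$ with endpoint $i$. When $i$ is a source of $S\cap T$, no arrow of $S\cap T$ ends at $i$, so every path from the source of $S$ to $i$ inside $S$ automatically has its final arrow outside $T_1$, hence is a root; conversely any root of $\Gamma$ ending at $i$ is obtained this way (with the trivial path serving as the root ending at the source of $S$ when that vertex lies in $T$). Thus $P_{\overrightarrow i}$ occurs with multiplicity equal to the number of paths from the source of $S$ to $i$ in $S$, as claimed. I expect the delicate point of the argument to be exactly this bookkeeping --- identifying the roots of the components with the sources of $S\cap T$, checking that distinct roots give genuinely distinct components (which holds because the ``tail'' of a component is built only from arrows of $S\cap T$, whereas the last arrow of a nontrivial root is not), and pinning down the multiplicity. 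Everything preceding it is a routine unwinding of the definitions of the fiber product, of $P_S$, and of linearization.
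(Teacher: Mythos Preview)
Your proof is correct and follows the paper's approach: identify the fiber product with $f'^{-1}(T)$ via Lemma~\ref{lem:fiberwithe}, then decompose this subquiver of $P_S$ into its connected components and recognize each as a $P$-type quiver. Your explicit identification of the roots of the forest and the verification that each component is $P_{\overrightarrow{i}}$ is more careful than the paper's terse line that ``each path ending in $S\cap T$ passes through precisely one source of $S\cap T$,'' and in particular it transparently covers roots whose endpoint is not a source of $S\cap T$ (which still yield $P$-type components, so the first sentence of the proposition holds).
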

\begin{proof}
We know from the previous lemma that $P_S \times_Q E_T$ can be identified with a subquiver of $P_S$ lying over $S \cap T$. 
So the vertices of $P_S\times_Q E_T$ can be identified with paths starting at the source of $S$ and ending in $S\cap T$, with the arrows between them exactly the ones in $P_S$ that lie over $S\cap T$; in particular,  the arrows still fit the description of those in a $P$-type quiver over $Q$.  Now each path ending in $S \cap T$ passes through precisely one source of $S\cap T$, naturally partitioning the vertices as described in the proposition.
\end{proof}

As one would expect, describing the fiber product of an arbitrary $X=(Q',f')$ with $P$-type objects is more complicated.  Roughly, we can think of $X \times_Q P_S$ as a path space for $Q'$ that records only the labels from $Q$ which are traversed to get to a vertex, rather than the exact path.

\begin{prop}\label{prop:ptimesi}
The fiber product of a $P$-type and an $I$-type quiver over $Q$ is a disjoint union of paths in $Q$ (i.e., $E$-type quivers).
\end{prop}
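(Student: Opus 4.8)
The plan is to argue directly with the combinatorial description of the fiber product, using the explicit realization of products in $\quivq$ recalled in Section~\ref{sect:quivq}. Fix $P_S$ with $S$ having unique source $s_0$, and $I_T$ with $T$ having unique sink $t_0$. I would begin by recording the relevant definitions: the vertices of $P_S$ are the paths $p$ in $S$ starting at $s_0$, the structure map sends $p$ to its endpoint $t(p)$, and there is an arrow $p \to p\za$ (lying over $\za$) whenever $p\za$ is again such a path; dually, the vertices of $I_T$ are the paths $q$ in $T$ ending at $t_0$, the structure map sends $q$ to its starting vertex $s(q)$, and there is an arrow $\za q \to q$ (lying over $\za$) whenever $\za q$ is again such a path. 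Feeding these into the concrete description of the fiber product, the vertices of $P_S \times_Q I_T$ become the pairs $(p,q)$ with $t(p) = s(q)$ in $Q$, and there is an arrow $(p_1,q_1) \to (p_2, q_2)$ lying over an arrow $\za$ of $Q$ exactly when $p_2 = p_1\za$ and $q_1 = \za q_2$.

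The crucial step is then the observation that every vertex of $P_S \times_Q I_T$ has at most one outgoing arrow and at most one incoming arrow. Indeed, an outgoing arrow at $(p_1, q_1)$ forces $\za$ to be the initial arrow of the path $q_1$, which is uniquely determined, and the target $(p_2,q_2)$ is then determined as well; dually, an incoming arrow at $(p_2, q_2)$ forces $\za$ to be the terminal arrow of $p_2$. I would also note that $P_S \times_Q I_T$ is acyclic: along each arrow the image in $Q$ advances along a genuine arrow of $Q$, so an oriented cycle upstairs would yield one in $Q$, which is impossible.

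From here the argument is routine: a finite acyclic quiver in which each vertex has in-degree and out-degree at most one is a disjoint union of directed paths (the underlying graph of a connected component has maximum degree two, hence is a path or a cycle; a cycle is excluded by acyclicity together with the degree bounds, which in turn force the path to be consistently oriented). To finish, I would check that on each such component the structure map to $Q$ is injective --- on vertices because the images along the oriented path are pairwise distinct (since $Q$ has no oriented cycles), and on arrows because distinct arrows of the component have distinct source vertices with distinct images. Hence each connected component is isomorphic over $Q$ to the inclusion of an oriented path in $Q$, that is, an $E$-type quiver, which is the assertion of the proposition.

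I expect the only delicate point to be getting the dual description of $I_T$ and the directions of its arrows exactly right, so that the arrow set of the fiber product comes out as stated above; once that is pinned down, the degree bounds --- which carry the whole proof --- are immediate, and the rest is bookkeeping. One should also keep an eye on the degenerate cases where $p$ or $q$ is an empty path: these are precisely the vertices with no incoming or no outgoing arrow, i.e. the two ends of each path component, and they cause no trouble.
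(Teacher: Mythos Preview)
Your proof is correct, but it takes a genuinely different route from the paper's. The paper first reduces to the case $S=T=Q$ by observing that $P_S \times_Q I_T$ is supported on $S\cap T$, over which $P_S$ and $I_T$ break up as disjoint unions of smaller $P$- and $I$-type quivers (via Proposition~\ref{prop:ptimese} and its dual); then, in the reduced case where $Q$ has a unique source $i$ and sink $j$, it identifies the vertices of $P_Q \times_Q I_Q$ with maximal paths in $Q$ together with a marked vertex, so that each connected component is visibly a maximal path. Your argument instead works uniformly for arbitrary $S,T$ and extracts the result from the single observation that every vertex of the fiber product has in-degree and out-degree at most one, then uses acyclicity of $Q$ to rule out cycles and to verify that the structure map is injective on each component. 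Your approach is more elementary and avoids the inductive reduction; the paper's approach has the virtue of making the components completely explicit (namely, the maximal paths of $Q$ in the reduced case), which is pleasant but not needed for the statement as written.
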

\begin{proof}
Let $S, T \subseteq Q$ be subquivers, so that we want to describe $P_S \times_Q I_T$. By the definition of fiber products, we know that $P_S \times_Q I_T$ has support $S \cap T$, over which $P_S$ and $I_T$ decompose as disjoint unions of $P$-type and $I$-type quivers, respectively.  So if $S\neq T$, we can distribute the product over these disjoint unions and then compute $P_S \times_Q I_T$ from the product of smaller $P$-type and $I$-type quivers.  For each of these products, we can repeat the process until we are left with products over the same subquiver of $Q$ in the base.

Hence we can assume without loss of generality that $S=T=Q$ for the remainder of the proof.  Since, by assumption, $P_Q$ and $I_Q$ are defined, it follows that $Q$ has a unique source $i$ and a unique sink $j$.  Then the vertices of $P_Q \times_Q I_Q$ lying over $k \in Q_0$ are pairs $(p,q)$ consisting of a path $p$ from $i$ to $k$, and  a path $q$ from $k$ to $j$; in other words, each vertex corresponds to a maximal path $pq$ in $Q$ with a distinguished vertex $k$.  Unraveling the definitions, we see that an arrow
\[
(p_1, q_1) \xrightarrow{(a,b)} (p_2, q_2)
\]
in $P_Q \times_Q I_Q$ occurs exactly when $p_1 q_1 = p_2 q_2$ are the same maximal path in $Q$ and $a=b$ is an arrow between adjacent distinguished vertices on this path.  
Thus each connected component of $P_Q \times_Q I_Q$ is a maximal path in $Q$.
\end{proof}

\begin{example}
Continuing with the setup of Examples \ref{ex:nonmin} and \ref{ex:pieobjects}, we get that
\[
\begin{tikzpicture}[point/.style={shape=circle,fill=black,scale=.5pt,outer sep=3pt},>=latex]
    \node[draw, color=white,label={left: $P_Q \times_Q I_Q \simeq $}] (2a) at (-0.5,0.5) {};
   \node[point,label={below:$3$}] (3a) at (0,1) {};
   \node[point,label={below:$3$}] (3b) at (0,0) {};
   \node[draw, color=white,scale=.6pt,outer sep=2pt] (2a) at (2,1) {};
  \node[point,label={below:$2$}] at (2,1) {};
  \node[draw, color=white,scale=.6pt,outer sep=3pt] (1a) at (4,1) {};
  \node[point,label={below:$1$}] at (4,1) {};
   \node[draw, color=white,scale=.6pt,outer sep=2pt] (2b) at (2,0) {};
  \node[point,label={below:$2$}] at (2,0) {};
  \node[draw, color=white,scale=.6pt,outer sep=3pt] (1b) at (4,0) {};
  \node[point,label={below:$1$}] at (4,0) {};
  
  \path[->]
  	(3a) edge node[above] {$a$} (2a) 
  	(3b) edge node[above] {$b$} (2b)
  	(2a) edge node[above] {$c$} (1a)
  	(2b) edge node[above] {$c$} (1b);
   \end{tikzpicture}
\]
can be identified with the two maximal paths in $Q$.
\end{example}

\begin{prop}\label{prop 25}
The fiber product of two $P$-type quivers over $Q$ is a disjoint union of $P$-type quivers.
\end{prop}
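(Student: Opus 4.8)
The plan is to read off the arrows of $P_S\times_Q P_T$ from the concrete description of the fiber product, notice that every vertex has at most one incoming arrow, and use this rigidity to identify each connected component with a $P$-type quiver. So first I would spell out the fiber product explicitly: writing $s_S,s_T$ for the sources of $S$ and $T$, a vertex of $P_S\times_Q P_T$ over $v\in Q_0$ is a pair $(p,q)$ with $p$ a path in $S$ from $s_S$ to $v$ and $q$ a path in $T$ from $s_T$ to $v$; and since an arrow of $P_S$ out of $p$ is the concatenation $p\mapsto p\alpha$ for an arrow $\alpha\in S_1$ with $s(\alpha)=v$, lying over $\alpha$ in $Q$, matching the structure maps in the definition of the fiber product shows that an arrow of $P_S\times_Q P_T$ out of $(p,q)$ is given by an arrow $\alpha\in(S\cap T)_1$ with $s(\alpha)=v$, and it points to $(p\alpha,q\alpha)$.

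The key point is then the observation that \emph{each vertex of $P_S\times_Q P_T$ has at most one incoming arrow}: a predecessor of $(p,q)$ is obtained by deleting a common final arrow from both $p$ and $q$, so it exists only when $p$ and $q$ are nonempty and end with the same arrow $\alpha$, and in that case it is the unique pair $(p',q')$ with $p=p'\alpha$ and $q=q'\alpha$. Next I would record that $P_S\times_Q P_T$ is a finite quiver over the acyclic quiver $Q$ and hence acyclic (pull back a topological ordering of $Q$ along the structure map), so every nonempty connected component has a source; combining this with the in-degree bound via the usual ``peak'' argument — a shortest undirected path joining two sources must have its edge orientation switch from forward to backward at some interior vertex, which would then receive two distinct incoming arrows — shows that each connected component has a \emph{unique} source.

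Finally, fix a connected component $D$ with unique source $(p_0,q_0)$, set $v_0=t(p_0)=t(q_0)\in(S\cap T)_0$, and let $\overrightarrow{v_0}$ be the successor closure of $v_0$ inside $S\cap T$, which has $v_0$ as its unique source, so that $P_{\overrightarrow{v_0}}$ is defined. Every vertex of $D$ is reachable from $(p_0,q_0)$ by a directed path, and a directed path appends the same arrow to both coordinates, so every vertex of $D$ has the form $(p_0w,q_0w)$ for a unique path $w$ in $S\cap T$ starting at $v_0$, i.e.\ a path in $\overrightarrow{v_0}$; conversely every such $(p_0w,q_0w)$ is again a vertex and is connected to $(p_0,q_0)$, so it lies in $D$. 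The assignment $(p_0w,q_0w)\mapsto w$ is thus a bijection onto the vertices of $P_{\overrightarrow{v_0}}$, and the arrow description from the first step upgrades it to an isomorphism $D\cong P_{\overrightarrow{v_0}}$ of quivers over $Q$. Since $P_S\times_Q P_T$ is the disjoint union of its connected components, this proves the proposition.

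I expect the only real content to be the in-degree-$\le 1$ observation, which is what makes the components rigid; the rest is bookkeeping. The fussiest step will be verifying that the identification $D\cong P_{\overrightarrow{v_0}}$ neither drops nor creates arrows — i.e.\ checking that the arrows of $P_S\times_Q P_T$ out of a vertex of $D$ correspond exactly to the arrows of $\overrightarrow{v_0}$ out of the associated vertex of $S\cap T$, using that $\overrightarrow{v_0}$ is a full subquiver of $S\cap T$.
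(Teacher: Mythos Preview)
Your proof is correct and follows essentially the same strategy as the paper's: both hinge on the observation that every vertex of the fiber product has in-degree at most one, and then identify the connected component of a source $(p_0,q_0)$ with $P_{\overrightarrow{v_0}}$ via $(p_0 w,q_0 w)\mapsto w$. The only difference is cosmetic---the paper first reduces (as in Proposition~\ref{prop:ptimesi}) to the case $S=T=Q$ before carrying out this analysis, whereas you work directly with arbitrary $S,T$ and take the successor closure inside $S\cap T$.
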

\begin{proof}
The same argument as in Proposition \ref{prop:ptimesi} allows us to reduce to the case $P_Q \times_Q P_Q$, where $Q$ has unique source $i$.  Then the vertices of $P_Q \times_Q P_Q$ can be identified with pairs of paths $(p,q)$ that start at $i$ and end at the same vertex of $Q$, and since each vertex of $P_Q$ has at most one incoming arrow, so must each vertex of $P_Q \times_Q P_Q$.

More precisely, an arrow
\[
(p_1, q_1) \xrightarrow{(a,b)} (p_2, q_2)
\]
in $P_Q \times_Q P_Q$ occurs exactly when $a$ and $b$ lie over the same arrow $c$ of $Q$, and both $p_1c=p_2$ and $q_1 c = q_2$ as paths in $Q$; in particular $p_2$ and $q_2$ are parallel paths starting at $i$ that end with the same arrow.  So any pair of paths $(p,q)\in (P_Q\times_QP_Q)_0$ that do not end with the same arrow give a source of $P_Q \times_Q P_Q$, and, for each vertex of the form $(pr,qr)$, where $r$ varies over the paths starting at the common endpoint $j$ of $p$ and $q$, there is a unique path in $P_Q \times_Q P_Q$ starting at $(p,q)$ and ending at $(pr,qr)$.  So in fact $(p,q)$ is the unique source of a connected component of $P_Q \times_Q P_Q$ which is isomorphic to $P_{\overrightarrow{j}}$.  Since all vertices   fall into some connected component of this form (not forgetting the case where both $p$ and $q$ are the trivial path at $i$), we see that $P_Q \times_Q P_Q$ is a disjoint union of $P$-type quivers.
\end{proof}

\subsection{Main result on PIE}
In this subsection, we apply Theorem \ref{thm:main} to the category PIE.

\begin{lemma}\label{lem 26}
For any acyclic quiver $Q$, the corresponding PIE category satisfies the hypotheses of Theorem \ref{thm:main}.
\end{lemma}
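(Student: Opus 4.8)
The plan is to verify, in turn, the five hypotheses of Theorem~\ref{thm:main} for the category $\catc = \textup{PIE}$: that its objects are connected, that they are wrappings, that it is acyclic, that it is closed under fiber products, and that condition~(\ref{eq 1.3}) holds. All but the fourth are short, so I would dispatch them first. For connectedness: $P_T$ and $I_T$ are only defined when $T$ has a unique source, respectively a unique sink, and a non-empty acyclic quiver whose underlying graph is disconnected has at least two sources and at least two sinks; hence every $P_T$ and $I_T$ occurring in PIE has $T$ connected, and $E_T$ is only taken for connected $T$. For the wrapping property: $E_T\colon T \hookrightarrow Q$ is an inclusion, so injective on arrows; and in $P_T$ (and dually $I_T$) there is at most one arrow between any two vertices, since an arrow out of the vertex labelled by a path $p$ goes to the vertex labelled by $p\za$ for a single arrow $\za$, which $p$ and $p\za$ together determine. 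Thus all objects of PIE are connected wrappings.

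For acyclicity I would read off Table~\ref{fig:piehom}. The diagonal gives $[E_T, E_T] = [P_T, P_T] = [I_T, I_T] = 1$ (for the $P$- and $I$-types the relevant count is the number of paths in $T$ from the source, resp.\ to the sink, of $T$ to itself, namely $1$), so every endomorphism in PIE is an identity. If $X \neq Y$ admitted morphisms in both directions, then $\supp X = \supp Y =: T$ by Lemma~\ref{lem:homtoe}(a), so $X$ and $Y$ both lie among $\{E_T, P_T, I_T\}$; but the table shows that all of $[E_T, P_T]$, $[E_T, I_T]$, $[P_T, I_T]$, $[I_T, P_T]$ vanish whenever the two objects involved are distinct. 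Hence no non-identity morphism is invertible, and by support-containment there are no cycles of distinct objects either, so PIE is acyclic. Condition~(\ref{eq 1.3}) then follows from the lemma identifying the objects of PIE with the unique minimal structure quivers for the indecomposable subprojective, subinjective and idempotent representations: indecomposability of each $L(x)$ is part of that statement, and if $x \neq y$ in PIE had $L(x) \simeq L(y)$ then $x$ and $y$ would be two minimal structure quivers for the same representation, contradicting uniqueness.

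The remaining and only substantive point is closure under fiber products, which I would settle by running through the combinations of $P$-, $I$- and $E$-type objects, using commutativity of $\times_Q$ to leave six cases. By Lemma~\ref{lem:fiberwithe}, $E_S \times_Q E_T$ is the restriction of $E_S$ to $T$, that is $E_{S \cap T}$, a disjoint union of $E$-type objects (one per connected component of $S \cap T$); Proposition~\ref{prop:ptimese} gives $P_S \times_Q E_T$ as a disjoint union of $P$-type objects, and dually $I_S \times_Q E_T$ is a disjoint union of $I$-types; Proposition~\ref{prop 25} gives $P_S \times_Q P_T$ as a disjoint union of $P$-types, and dually $I_S \times_Q I_T$ a disjoint union of $I$-types; and Proposition~\ref{prop:ptimesi} gives $P_S \times_Q I_T$ as a disjoint union of $E$-types. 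Since in every case the fiber product is a disjoint union of objects of PIE, the category is closed under fiber products, which finishes the verification.

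I expect the genuinely hard work to lie not in this lemma but in the preceding subsection: Lemma~\ref{lem:fiberwithe} and Propositions~\ref{prop:ptimese}, \ref{prop:ptimesi}, \ref{prop 25}, the subtlest being that $P_S \times_Q I_T$ collapses to a union of paths. Given those, the proof of Lemma~\ref{lem 26} is pure bookkeeping: one need only be careful to exhaust all type combinations (including the ones reduced by symmetry and the degenerate coincidences recorded in Remark~\ref{rem:coincidences}) and to recall that ``closed under fiber products'' permits the product to break into a disjoint union of objects of the category.
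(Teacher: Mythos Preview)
Your proof is correct and follows essentially the same approach as the paper: verify connectedness, the wrapping condition, and condition~(\ref{eq 1.3}) directly from the definitions and the minimal-structure-quiver lemma, then invoke Lemma~\ref{lem:fiberwithe} and Propositions~\ref{prop:ptimese}, \ref{prop:ptimesi}, \ref{prop 25} (together with their duals on $Q^{\rm op}$) for closure under fiber products. The only cosmetic difference is that for acyclicity the paper exhibits a block upper-triangular unipotent Hom matrix while you check the defining conditions directly from Table~\ref{fig:piehom}; one small slip is that for connectedness you argue that $T$ is connected, whereas what is needed (and is immediate, since $P_T$ is a rooted tree) is that $P_T$ itself is connected.
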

\begin{proof}
The category PIE was defined so that the objects are connected and wrappings and linearize to distinct indecomposables. 

To see that PIE is acyclic, we demonstrate an ordering of its objects making the Hom matrix upper triangular unipotent.  First, we ``block'' the objects together into sets $\mathscr{B}_S = \{P_S, I_S, E_S \}$ for each $S \subseteq Q$, keeping in mind our convention of omitting $P_S$ or $I_S$ when the object is undefined, and the possibility of coincidences among $P_S,I_S$ and $E_S$.  If these blocks are ordered so that $\mathscr{B}_S$ comes before $\mathscr{B}_T$ whenever $S \subseteq T$, the Hom matrix will be block lower triangular by Lemma \ref{lem:homtoe}(a).  On the diagonal are then the blocks where $S=T$, which we see from Table \ref{fig:piehom} are always lower triangular: to get a nonzero entry above the main diagonal, we need a coincidence $E_S=P_S$ or $E_S = I_S$, but in this case the corresponding row and column would be omitted as redundant since $S=T$.

The fact that PIE is closed under fiber products follows from applying Lemma \ref{lem:fiberwithe} and Propositions \ref{prop:ptimese}, \ref{prop:ptimesi} and \ref{prop 25} to $Q$ and $Q^{\rm op}$.
\end{proof}

As in section \ref{sect main}, each object $x$ of PIE, defines an idempotent
\begin{equation}
\delta_x \defeq \sum_{z \in \textup{PIE}_0} \mu(z,x) z
\end{equation}
in $M(\textup{PIE})$.  Let $e_x=L(\delta_x)$ be its image in $R(Q)$.  (Note that $e_x$ is different than the $e(x)$ of Section \ref{sect:projreps}.)

We are ready for the main result of this section.
\begin{theorem} Let $Q$ be a quiver without oriented cycles. Then  $R(Q)$ has a direct product structure
\[
R(Q) \cong \prod_{x \in \textup{PIE}_0} \langle e_x \rangle,
\]
where $\langle e_x \rangle $ is the principal ideal generated by $e_x$.
\end{theorem}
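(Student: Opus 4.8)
The plan is to obtain this theorem as an essentially immediate consequence of Theorem~\ref{thm:main} together with its corollary, since the substantive verification work has already been packaged into Lemma~\ref{lem 26}. So the proof will consist of assembling those pieces and supplying one small bookkeeping observation: that $(Q,\mathrm{id})$ is itself an object of PIE.

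First I would record that $(Q,\mathrm{id}) = E_Q$: taking the subquiver $T = Q$ in the definition of the $E$-type objects gives precisely the inclusion of $Q$ into itself as a quiver over $Q$, so $E_Q \in \textup{PIE}_0$. Moreover, by Lemma~\ref{lem:homtoe}(b) with $T = Q$, we have $[X, E_Q] = 1$ for every object $X$ of PIE, since $\supp X \subseteq Q$ always holds. Hence $E_Q$ is a terminal object of PIE, and therefore serves as the identity element of the M\"obius ring $M(\textup{PIE})$.

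Next I would invoke Lemma~\ref{lem 26}, which tells us that PIE is an acyclic subcategory of $\quivq$ whose objects are connected wrappings, which is closed under fiber products, and all of whose objects linearize to pairwise non-isomorphic indecomposables; thus the hypotheses of Theorem~\ref{thm:main} are satisfied. That theorem then gives that the subring of $R(Q)$ generated by $L(\textup{PIE})$ is isomorphic, via linearization $L$, to $M(\textup{PIE})$, and part~(1) of the corollary to Theorem~\ref{thm:main} says that the elements $e_x = L(\delta_x)$, for $x \in \textup{PIE}_0$, form a $\Z$-basis of this subring consisting of orthogonal idempotents. Since $L$ carries the identity $E_Q$ of $M(\textup{PIE})$ to the identity $\mathbb{1}_Q$ of $R(Q)$, applying $L$ to the relation $E_Q = \sum_{x \in \textup{PIE}_0} [x,E_Q]\,\delta_x = \sum_{x \in \textup{PIE}_0} \delta_x$ (formula \eqref{eq:xfromdelta}, using $[x,E_Q]=1$) yields $1 = \sum_{x \in \textup{PIE}_0} e_x$ in $R(Q)$, a decomposition of $1$ into orthogonal idempotents.

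Finally I would conclude using the standard fact that a decomposition $1 = \sum_x e_x$ into orthogonal idempotents in a commutative ring $R$ produces a ring isomorphism $R \xrightarrow{\sim} \prod_x e_x R$, $r \mapsto (e_x r)_x$, with $e_x R = \langle e_x \rangle$; taking $R = R(Q)$ gives $R(Q) \cong \prod_{x \in \textup{PIE}_0} \langle e_x \rangle$. This is exactly part~(2) of the corollary to Theorem~\ref{thm:main} specialized to $\catc = \textup{PIE}$. There is no real obstacle at this stage: the genuine content — acyclicity of PIE, closure under fiber products, the wrapping/indecomposability conditions, and the identification of the $P$-, $I$-, and $E$-type objects as the minimal structure quivers of the indecomposable subprojective, subinjective, and idempotent representations — has already been established, so the only thing to be careful about here is verifying that $E_Q$ lies in PIE and is terminal there.
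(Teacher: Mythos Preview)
Your proof is correct and takes essentially the same approach as the paper: invoke Lemma~\ref{lem 26} so that Theorem~\ref{thm:main} and its corollary apply, then read off the direct product decomposition. You have simply made explicit the small verification (that $(Q,\mathrm{id}) = E_Q$ lies in PIE and is terminal there) which the paper leaves implicit when appealing to part~(2) of the corollary.
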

\begin{proof}
According to Lemma \ref{lem 26}, Theorem \ref{thm:main} and its corollary apply in this situation. The result now follows.
\end{proof}

\begin{example}
Continuing with the setup of Example \ref{ex:pieobjects}, we can roughly visualize the  PIE category as
\[
\begin{tikzpicture}[>=latex,description/.style={fill=white,inner sep=2pt}] 
\matrix (m) [matrix of math nodes, row sep=3em, 
column sep=2.5em, text height=1.5ex, text depth=0.25ex] 
{   & E_Q &   \\ 
E_{\za \zb}  & E_{\za \zg} & E_{\zb \zg}  \\ 
P_{\za \zb}  & I_{\za \zb} &   \\  
E_{\za}    & E_{\zb} & E_{\zg}  \\ 
E_3 & E_2 & E_1 \\ }; 
\path[->,font=\scriptsize] 
(m-2-1) edge (m-1-2)
(m-2-2) edge (m-1-2)
(m-2-3) edge (m-1-2)
(m-3-1) edge (m-2-1)
(m-3-2) edge (m-2-1)
(m-4-1) edge (m-3-1)
(m-4-1) edge (m-3-2)
(m-4-1) edge (m-2-2)
(m-4-2) edge (m-3-1)
(m-4-2) edge (m-3-2)
(m-4-2) edge (m-2-3)
(m-4-3) edge (m-2-2)
(m-4-3) edge (m-2-3)
(m-5-1) edge (m-4-1)
(m-5-1) edge (m-4-2)
(m-5-2) edge (m-4-1)
(m-5-2) edge (m-4-2)
(m-5-2) edge (m-4-3)
(m-5-3) edge (m-4-3);
\end{tikzpicture}
\]
Here we have the objects of PIE as nodes, and there is a path from $x$ to $y$ in our diagram if and only if there exists a morphism from $x$ to $y$ in PIE (though we cannot count morphisms from this visualization).  To get the idempotent associated to $x =E_{\za \zb}$, for example, we start by writing
\[
\begin{split}
e_{x} =\ &E_{\za \zb} + \mu(P_{\za \zb}, E_{\za \zb}) P_{\za \zb} + \mu(I_{\za \zb}, E_{\za \zb}) I_{\za \zb} + \mu(E_{\za}, E_{\za \zb}) E_{\za}\\
&+ \mu(E_{\zb}, E_{\za \zb}) E_{\zb}+ \mu(E_3, E_{\za \zb}) E_3 + \mu(E_2, E_{\za \zb}) E_2 ,\\
\end{split}
\]
where we have used the definition of $e_x$, that $\mu(x,x)=1$, and that $\mu(z,x) = 0$ when $[z,x]=0$.	
Then equation (\ref{eq:muformula}) can be used to calculate these coefficients, starting with the ones closest to $E_{\za \zb}$.  For example, we first get 
\[
\mu(P_{\za \zb}, E_{\za \zb} )= \mu(I_{\za \zb}, E_{\za \zb}) = -1
\]
from the fact that $[x,x]=1$.  Similarly, we can find $\mu(E_\za , E_{\za \zb}) = \mu(E_\zb , E_{\za \zb})= 1$.  Then to get $\mu(E_2, E_{\za \zb})$, there is a unique morphism from $E_2$ to each object in the interval between $E_2$ and $E_{\za \zb}$ except $P_{\za \zb}$, for which we have $[E_2, P_{\za \zb}] = 2$.  So here we find
\[
\mu(E_2 , E_{\za \zb}) = -1 - 2(-1) - (-1) - 1 - 1= 0.
\]
A similar computation shows $\mu(E_3, E_{\za \zb}) = 0$, so that finally
\[
e_{x} = E_{\za \zb} - P_{\za \zb} -  I_{\za \zb} + E_{\za} + E_{\zb} .
\]
The entire basis of orthogonal idempotents for $M(\textup{PIE})$ is:
\[
\begin{split}
\{ &E_1,\ E_2,\ E_3,\ 
E_\za - E_2 - E_3,\  
E_\zb- E_2 - E_3,\ 
E_\zg- E_2 - E_1,\ \\
&P_{\za \zb} - E_\za - E_\zb - E_3,\ 
I_{\za \zb}  - E_\za - E_\zb - E_2,\ \\
&E_{\za \zb} - P_{\za \zb} -  I_{\za \zb} + E_{\za} + E_{\zb},\ 
E_{\za \zg} - E_\za - E_\zg - E_2,\  
E_{\zb \zg}- E_\zb - E_\zg - E_2,\ \\
&E_Q -E_{\za \zb} -E_{\za \zg} -E_{\zb \zg} +E_\za  +E_\zb  +E_\zg -E_2
\}.
\end{split}
\]

\end{example}

\subsection{Computation of specific M\"obius functions}
Although one generally cannot expect closed formulas for values of the M\"obius function $\mu$, even in the poset case, we can calculate them for some pairs of objects in the PIE category.  Given two subquivers $S, T \subseteq Q$, we say that they have the same \key{skeleton} if, for every pair of vertices $v,w \in Q_0$, there is at least one edge between $v$ and $w$ in $S$ exactly when there is at least one edge between $v$ and $w$ in $T$.  When $S$ and $T$ have the same skeleton, $P_S$ exists if and only if $P_T$ exists, and similarly for $I$-type objects.

\begin{prop}\label{prop:muvalues}
Let $S \subseteq T$ be subquivers of an acyclic quiver $Q$ which have the same skeleton, and write $\mathcal{A}=T_1 \setminus S_1$ for the set of arrows of $T$ which are not in $S$.  Then the following hold in case $P_S \neq E_S \neq I_S$.
\begin{align}
\label{eq:muespt}
\mu(E_S, P_T) &= 0 \\
\label{eq:mueset}
\mu(E_S, E_T) &= (-1)^{\# \mathcal{A}} \\
\label{eq:mupspt}
\mu(P_S, P_T) &= (-1)^{\# \mathcal{A}} \\
\label{eq:mupset}
\mu(P_S, E_T) &= (-1)^{\# \mathcal{A}+1} \\
\label{eq:mupsit}
\mu(P_S, I_T) &= 0
\end{align}
When $X=P_S = E_S \neq I_S$, we have the following formulas.
\begin{align}
\label{eq:muxet}
\mu(X, E_T) &= 0 \\
\label{eq:muxpt}
\mu(X, P_T) &= (-1)^{\# \mathcal{A}} \\
\label{eq:muxit}
\mu(X, I_T) &= 0
\end{align}
In the case that $Y=P_S = E_S = I_S$, we have 
\begin{align}
\label{eq:muyet}
\mu(Y, E_T) &=  (-1)^{\# \mathcal{A}+1} \\
\label{eq:muypt}
\mu(Y, P_T) &= (-1)^{\# \mathcal{A}} 
\end{align}
Dual formulas also hold (i.e., when $P$- and $I$-type objects are interchanged).
\end{prop}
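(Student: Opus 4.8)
The plan is to compute every value on the list straight from the recursion (\ref{eq:muformula}) for the M\"obius function, after first describing the relevant intervals of PIE combinatorially. We may assume $S\subsetneq T$ (equivalently $\mathcal A\neq\emptyset$); the case $S=T$ is immediate, once one notes that (\ref{eq:muxet}) and (\ref{eq:muyet}) concern a nontrivial pair of objects and so tacitly require $S\neq T$, whereas the remaining formulas give the correct base values at $S=T$. The key structural observation is that, since $S$ and $T$ have the same skeleton and $Q$ is acyclic, each arrow of $\mathcal A=T_1\setminus S_1$ is parallel to an arrow already present in $S$: an arrow $v\to w$ of $T$ forces an edge between $v$ and $w$ in $S$, and that edge must be oriented $v\to w$ because $Q$ has no oriented $2$-cycle. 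Hence the subquivers $U$ with $S\subseteq U\subseteq T$ are exactly the quivers $S_{\mathcal B}:=S\cup\mathcal B$ for $\mathcal B\subseteq\mathcal A$, and $\mathcal B\mapsto S_{\mathcal B}$ identifies the poset of these subquivers with the Boolean lattice $B(\mathcal A)$. The same argument shows $S$, $T$ and every $S_{\mathcal B}$ share the same sources and the same sinks; so $P_{S_{\mathcal B}}$ (resp.\ $I_{S_{\mathcal B}}$) is defined for all $\mathcal B$ as soon as $P_S$ (resp.\ $I_S$) is, and $P_{S_{\mathcal B}}=E_{S_{\mathcal B}}$ (resp.\ $I_{S_{\mathcal B}}=E_{S_{\mathcal B}}$) precisely when $S_{\mathcal B}$ has no parallel paths, i.e.\ precisely for $\mathcal B=\emptyset$ under the hypothesis $P_S\neq E_S$ (resp.\ $I_S\neq E_S$).

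Next I would read the Hom-sets among the objects $E_{S_{\mathcal B}},P_{S_{\mathcal B}},I_{S_{\mathcal B}}$ off Table \ref{fig:piehom}. For $\mathcal B\subseteq\mathcal B'$, each of $[E_{S_{\mathcal B}},E_{S_{\mathcal B'}}]$, $[P_{S_{\mathcal B}},P_{S_{\mathcal B'}}]$, $[I_{S_{\mathcal B}},I_{S_{\mathcal B'}}]$, $[P_{S_{\mathcal B}},E_{S_{\mathcal B'}}]$, $[I_{S_{\mathcal B}},E_{S_{\mathcal B'}}]$ equals $1$, because the path counts in Table \ref{fig:piehom} are all $1$ here: $S_{\mathcal B}$ and $S_{\mathcal B'}$ share their unique source and unique sink, and $Q$ acyclic forces the only path from that source to itself to be trivial. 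On the other hand $[E_{S_{\mathcal B}},P_{S_{\mathcal B'}}]$, $[E_{S_{\mathcal B}},I_{S_{\mathcal B'}}]$, $[P_{S_{\mathcal B}},I_{S_{\mathcal B'}}]$, $[I_{S_{\mathcal B}},P_{S_{\mathcal B'}}]$ vanish whenever the two objects are genuinely distinct. So every Hom-set inside the relevant intervals is $0$ or $1$; in the case $P_S\neq E_S\neq I_S$ one recognizes the interval $[E_S,E_T]$ as $B(\mathcal A)$, $[P_S,P_T]$ as $B(\mathcal A)$, and $[P_S,E_T]$ as $B(\mathcal A)\times\{P<E\}$ (the product with the two-element chain), while there is no morphism $E_S\to P_T$ or $P_S\to I_T$. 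Multiplicativity of the M\"obius function over a product of posets, together with $\mu_{B(\mathcal A)}(\emptyset,\mathcal A)=(-1)^{\#\mathcal A}$, then yields (\ref{eq:muespt})--(\ref{eq:mupsit}). Equivalently — and in a form that adapts to the coincidence cases — I would induct on $\#\mathcal A$ using (\ref{eq:muformula}): for instance $\mu(E_S,E_T)=-\sum_{\emptyset\neq\mathcal B\subseteq\mathcal A}\mu(E_{S_{\mathcal B}},E_T)=(-1)^{\#\mathcal A}$ by the inductive hypothesis and the elementary identity $\sum_{\emptyset\neq\mathcal B\subseteq\mathcal A}(-1)^{\#\mathcal A-\#\mathcal B}=-(-1)^{\#\mathcal A}$, while the sum for $\mu(P_S,E_T)$ additionally picks up the single term $\mu(E_S,E_T)=(-1)^{\#\mathcal A}$ and the paired terms $\mu(E_{S_{\mathcal B}},E_T)+\mu(P_{S_{\mathcal B}},E_T)$ with $\mathcal B\neq\emptyset$ cancel, leaving $(-1)^{\#\mathcal A+1}$.

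For the coincidence cases $X=P_S=E_S\neq I_S$ and $Y=P_S=E_S=I_S$ the bottom of the interval is a \emph{single} object, so the objects $z$ appearing in the sum of (\ref{eq:muformula}) for $\mu(X,E_T)$, $\mu(X,P_T)$, $\mu(Y,E_T)$, $\mu(Y,P_T)$ are exactly the $E_{S_{\mathcal B}},P_{S_{\mathcal B}}$ (and, in the $Y$ case, $I_{S_{\mathcal B}}$) with $\emptyset\neq\mathcal B\subseteq\mathcal A$ — each of which falls under the already-proved case $P_{S_{\mathcal B}}\neq E_{S_{\mathcal B}}\neq I_{S_{\mathcal B}}$ relative to the pair $(S_{\mathcal B},T)$. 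Substituting those values and summing with the same binomial identity yields (\ref{eq:muxet})--(\ref{eq:muypt}); note that, unlike in the computation of $\mu(P_S,E_T)$ above, here the term ``$E_S$'' is absent from the sum (it \emph{is} the bottom object $X$ or $Y$), which is exactly why $\mu(X,E_T)=0$ whereas $\mu(Y,E_T)=(-1)^{\#\mathcal A+1}$. The vanishing statements (\ref{eq:muespt}), (\ref{eq:mupsit}), (\ref{eq:muxit}) are immediate from fact (b) of Section~\ref{sect:acycliccats} together with Table \ref{fig:piehom}, since the Hom-set in question is empty (or its target is undefined). Finally the dual formulas follow by running the whole argument for $Q^{\mathrm{op}}$, under which the PIE categories of $Q$ and $Q^{\mathrm{op}}$ correspond with $P$- and $I$-type objects interchanged.

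The only real work I anticipate is the bookkeeping in the second paragraph: verifying that every Hom-set inside the intervals is $0$ or $1$ — which rests entirely on ``same skeleton'' forcing a common source and sink, so that by acyclicity all path counts appearing in Table \ref{fig:piehom} equal $1$ — and keeping straight which PIE-objects lie in each interval under the possible coincidences among $P_S$, $I_S$, $E_S$. Once that is pinned down, every value on the list collapses onto the one elementary identity $\sum_{\emptyset\neq\mathcal B\subseteq\mathcal A}(-1)^{\#\mathcal A-\#\mathcal B}=-(-1)^{\#\mathcal A}$.
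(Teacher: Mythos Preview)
Your proposal is correct and follows essentially the same approach as the paper: both verify that all relevant Hom-sets are $0$ or $1$ (so the intervals are posets), identify the intervals $[E_S,E_T]$ and $[P_S,P_T]$ with the Boolean lattice $B(\mathcal A)$, and then handle the remaining formulas and coincidence cases via the recursion (\ref{eq:muformula}) with pairwise cancellation of the $E$-type and $P$-type terms. Your observation that $[P_S,E_T]\cong B(\mathcal A)\times\{P<E\}$ and the appeal to multiplicativity of $\mu$ over products is a slightly slicker route to (\ref{eq:mupset}) than the paper's direct recursion, but the paper's version is exactly your ``equivalently'' paragraph. One small slip: the clause ``precisely for $\mathcal B=\emptyset$ under the hypothesis $P_S\neq E_S$'' is garbled---what you need (and use) is that for every $\mathcal B\neq\emptyset$ the quiver $S_{\mathcal B}$ has a pair of parallel arrows, hence $P_{S_{\mathcal B}}\neq E_{S_{\mathcal B}}\neq I_{S_{\mathcal B}}$, so the inductive step always lands in the already-proved generic case.
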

\begin{proof}
The key is that when $S$ and $T$ have the same skeleton, all the Hom sets involved in finding the formulas of the proposition have at most one element.  In other words, we are computing values of the M\"obius function of some poset in each case. 
For a given $T \subseteq Q$, there is a unique minimal subquiver of $Q$ with the same skeleton as $T$.  Remark \ref{rem:coincidences} implies that this is the only possible subquiver with the same skeleton as $T$ which may be simultaneously $P$- and $E$-type.

Equations (\ref{eq:muespt}) and (\ref{eq:mupsit}) follow from fact (b) of Section \ref{sect:acycliccats}.  
The top row of Table \ref{fig:piehom} shows that the full subcategory of PIE consisting of objects between $E_S$ and $E_T$ (in the Hom order) is isomorphic to the poset of subsets of $\mathcal{A}$.  The M\"obius function of this poset is well known \cite[3.8.3]{stanleyenumcombin}, giving (\ref{eq:mueset}).  
The same argument gives (\ref{eq:mupspt}), since the only objects $Z$ for which there exist morphisms $P_S \to Z \to P_T$ are $P$-type.  
To see (\ref{eq:mupset}), we use equation (\ref{eq:muformula}) to compute
\begin{multline}\label{eq:mucomp1}
-\mu(P_S, E_T) = \sum_{P_S < Z \leq E_T} [P_S, Z] \,\mu(Z, E_T) = \\
\mu(E_S, E_T) + \sum_{S \subsetneq Q' \subseteq T} \left( \mu(E_{Q'}, E_T) + \mu(P_{Q'}, E_T) \right) = \mu(E_S, E_T),
\end{multline}
where the rightmost equality follows from induction by canceling out pairwise each term of the sum.

Now when $X=P_S=E_S$, the equation (\ref{eq:mucomp1}) still holds except that the term $\mu(E_S, E_T)$ is absent, so we get (\ref{eq:muxet}).  
Again, morphisms $X \to P_T$ can only factor through $P$-type objects, so the same argument for (\ref{eq:mupspt}) applies to give (\ref{eq:muxpt}).
In this case there are still no morphisms from $P_S$ to $I_T$, so (\ref{eq:muxit}) follows.

Finally, when $Y=P_S=E_S=I_S$ is just a path in $Q$, it has morphisms to objects of all types in PIE.  So we get
\begin{equation}\label{eq:mucomp2}
\begin{split}
-\mu(Y, E_T) = \sum_{P_S < Z \leq E_T} [Y, Z] \mu(Z, E_T) = \\
\sum_{S \subsetneq Q' \subseteq T} \left( \mu(E_{Q'}, E_T) + \mu(P_{Q'}, E_T) +\mu(I_{Q'}, E_T) \right) = \\
\sum_{S \subsetneq Q' \subseteq T} \mu(P_{Q'}, E_T) = -\sum_{S \subsetneq Q' \subseteq T} \mu(E_{Q'}, E_T) = - (-1)^{\# \mathcal{A}} = (-1)^{\# \mathcal{A}+1}
\end{split}
\end{equation}
by applying formulas from the first group and canceling some terms.  
The same argument for (\ref{eq:mupspt}) and (\ref{eq:muxpt}) will give (\ref{eq:muypt}).  By applying the formulas to $Q^{\rm op}$, we get similar formulas on $Q$ with $P$- and $I$-type objects interchanged.
\end{proof}

The hypothesis that $S$ and $T$ have the same skeleton can be relaxed for several of the formulas; for example, the same proof shows that (\ref{eq:muespt}) and (\ref{eq:mupsit}) hold for all subquivers $S$ and $T$ when $P_S \neq E_S$.  

\section{Future Directions}
Here we suggest a few directions for future work.

\begin{enumerate}[1)]
\addtolength{\itemsep}{0.5\baselineskip}
\item What are other examples of categories of quivers over $Q$ satisfying the hypotheses of the Theorem \ref{thm:main}?  For example, when $Q$ is any quiver, Section 4 of \cite{Herschend:2009kx} gives such a category (with infinitely many objects, but see Remark \ref{rem:cfinite}) in the course of studying string and band modules.  Or when $Q$ is a rooted tree quiver, there is a collection of ``reduced quivers over $Q$'' given in \cite{kinserrootedtrees} which satisfies these hypotheses.

A result of Ringel states that if $V$ is an exceptional representation of a quiver (i.e., $\Ext^i (V,V) = 0$, for all $i\ge1$), then $V$ has a structure quiver which is a tree \cite{Ringel:1998gf}.  This structure quiver is not unique, but one may try to give ``good'' choices of structure quivers for some class of exceptional modules so that Theorem \ref{thm:main} can be applied.

\item Can we get more closed formulas for values of $\mu$, in addition to Proposition \ref{prop:muvalues} (for the PIE category, or any other example)?

\item When does Theorem \ref{thm:main} give \emph{all} of the idempotents of $R(Q)$ (or how can it be improved to give all idempotents)?  That is, under what conditions on $\catc$ is it impossible to write each $L(\zd_x)$ as a nontrivial sum of idempotents?  The PIE category will not generally give all idempotents, but the rooted tree case mentioned above does.

\item Is there a representation theoretic interpretation for the idempotents obtained from the PIE category?  For example, given $x \in \textup{PIE}_0$, what properties of $V \in \rep(Q)$ are necessary or sufficient for $e_x V=0$? (cf. Prop. 32 and 35 of \cite{kinserrootedtrees})
\end{enumerate}

\subsection*{Acknowledgement} The authors would like to thank the referee for helpful comments.

\bibliographystyle{alpha}
\bibliography{ryanbiblio}

\def\cprime{$'$} \def\ocirc#1{\ifmmode\setbox0=\hbox{$#1$}\dimen0=\ht0
  \advance\dimen0 by1pt\rlap{\hbox to\wd0{\hss\raise\dimen0
  \hbox{\hskip.2em$\scriptscriptstyle\circ$}\hss}}#1\else {\accent"17 #1}\fi}
\begin{thebibliography}{EOT04}

\bibitem[ASS06]{assemetal}
Ibrahim Assem, Daniel Simson, and Andrzej Skowro{\'n}ski.
\newblock {\em Elements of the representation theory of associative algebras.
  {V}ol. 1}, volume~65 of {\em London Mathematical Society Student Texts}.
\newblock Cambridge University Press, Cambridge, 2006.
\newblock Techniques of representation theory.

\bibitem[Ben86]{MR842476}
D.~J. Benson.
\newblock Modules for finite groups: representation rings, quivers and
  varieties.
\newblock In {\em Representation theory, {II} ({O}ttawa, {O}nt., 1984)}, volume
  1178 of {\em Lecture Notes in Math.}, pages 1--24. Springer, Berlin, 1986.

\bibitem[CB89]{Crawley-Boevey:1989rr}
William Crawley-Boevey.
\newblock Maps between representations of zero-relation algebras.
\newblock {\em J. Algebra}, 126(2):259--263, 1989.

\bibitem[CB90]{MR1171233}
William Crawley-Boevey.
\newblock Matrix problems and {D}rozd's theorem.
\newblock In {\em Topics in algebra, {P}art 1 ({W}arsaw, 1988)}, volume~26 of
  {\em Banach Center Publ.}, pages 199--222. PWN, Warsaw, 1990.

\bibitem[DM82]{DMtannakian}
Pierre Deligne and James~S. Milne.
\newblock {\em Tannakian categories in Hodge cycles, motives, and {S}himura
  varieties}, volume 900 of {\em Lecture Notes in Mathematics}, pages 101--228.
\newblock Springer-Verlag, Berlin, 1982.
\newblock Available at \url{http://www.jmilne.org/math/articles/1982b.pdf}.

\bibitem[EOT04]{Enochs:2004ve}
Edgar Enochs, Luis Oyonarte, and Blas Torrecillas.
\newblock Flat covers and flat representations of quivers.
\newblock {\em Comm. Algebra}, 32(4):1319--1338, 2004.

\bibitem[Ful97]{MR1464693}
William Fulton.
\newblock {\em Young tableaux}, volume~35 of {\em London Mathematical Society
  Student Texts}.
\newblock Cambridge University Press, Cambridge, 1997.
\newblock With applications to representation theory and geometry.

\bibitem[Gre73]{greenemobiusalgebra}
Curtis Greene.
\newblock On the {M}\"obius algebra of a partially ordered set.
\newblock {\em Advances in Math.}, 10:177--187, 1973.

\bibitem[Hai80]{haighmobiusalgebra}
John Haigh.
\newblock On the {M}\"obius algebra and the {G}rothendieck ring of a finite
  category.
\newblock {\em J. London Math. Soc. (2)}, 21(1):81--92, 1980.

\bibitem[Her08a]{herschende6}
Martin Herschend.
\newblock On the representation rings of quivers of exceptional {D}ynkin type.
\newblock {\em Bull. Sci. Math.}, 132(5):395--418, 2008.

\bibitem[Her08b]{herschtensorjpaa}
Martin Herschend.
\newblock Tensor products on quiver representations.
\newblock {\em J. Pure Appl. Algebra}, 212(2):452--469, 2008.

\bibitem[Her09]{MR2563181}
Martin Herschend.
\newblock On the representation ring of a quiver.
\newblock {\em Algebr. Represent. Theory}, 12(6):513--541, 2009.

\bibitem[Her10]{Herschend:2009kx}
Martin Herschend.
\newblock Solution to the {C}lebsch-{G}ordan problem for string algebras.
\newblock {\em Journal of Pure and Applied Algebra}, 214(11):1996--2008, 2010.

\bibitem[Kin08]{kinserrank}
Ryan Kinser.
\newblock The rank of a quiver representation.
\newblock {\em J. Algebra}, 320(6):2363--2387, 2008.

\bibitem[Kin10]{kinserrootedtrees}
Ryan Kinser.
\newblock Rank functions on rooted tree quivers.
\newblock {\em Duke Math. J.}, 152(1):27--92, 2010.

\bibitem[Koz08]{MR2361455}
Dmitry Kozlov.
\newblock {\em Combinatorial algebraic topology}, volume~21 of {\em Algorithms
  and Computation in Mathematics}.
\newblock Springer, Berlin, 2008.

\bibitem[Lei08]{MR2393085}
Tom Leinster.
\newblock The {E}uler characteristic of a category.
\newblock {\em Doc. Math.}, 13:21--49, 2008.

\bibitem[Rin98]{Ringel:1998gf}
Claus~Michael Ringel.
\newblock Exceptional modules are tree modules.
\newblock In {\em Proceedings of the Sixth Conference of the International
  Linear Algebra Society (Chemnitz, 1996)}, volume 275/276, pages 471--493,
  1998.

\bibitem[Sta97]{stanleyenumcombin}
Richard~P. Stanley.
\newblock {\em Enumerative combinatorics. {V}ol. 1}, volume~49 of {\em
  Cambridge Studies in Advanced Mathematics}.
\newblock Cambridge University Press, Cambridge, 1997.
\newblock With a foreword by Gian-Carlo Rota, Corrected reprint of the 1986
  original.

\bibitem[Str00]{strassenasymptotic}
Volker Strassen.
\newblock Asymptotic degeneration of representations of quivers.
\newblock {\em Comment. Math. Helv.}, 75(4):594--607, 2000.

\end{thebibliography}

\end{document}